\DeclareMathOperator{\Tr}{Tr}
\DeclareMathOperator{\supp}{supp}
\newcommand{\ess}{\text{\rm ess}}
\DeclareMathOperator{\conv}{conv}
\renewcommand\Re{\hbox{{\rm Re}}\,}
\newcommand{\Abs}[1]{\left\lvert#1\right\rvert}
\newcommand{\norm}[1]{\lVert#1\rVert}
\newcommand{\Norm}[1]{\left\lVert#1\right\rVert}
\newcommand{\Br}[1]{\left(#1\right)}
\newcommand{\CBr}[1]{\left\{#1\right\}}
\newcommand{\SBr}[1]{\left[#1\right]}
\newcommand{\C}{{\mathbb C}}
\newcommand{\D}{{\mathbb D}}
\newcommand{\N}{\mathbb N}
\newcommand{\T}{{\mathbb T}}
\newcommand{\Z}{{\mathbb Z}}
\newcommand{\bS}{{\mathbf S}}
\newcommand{\calE}{\mathcal{E}}
\newcommand{\calR}{\mathcal{R}}
\newcommand{\calX}{\mathcal{X}}
\DeclareFontFamily{U}{mathx}{\hyphenchar\font45}
\DeclareFontShape{U}{mathx}{m}{n}{<5> <6> <7> <8> <9> <10>
<10.95> <12> <14.4> <17.28> <20.74> <24.88> mathx10}{}
\DeclareSymbolFont{mathx}{U}{mathx}{m}{n}
\DeclareMathAccent{\widecheck}{0}{mathx}{"71}
\numberwithin{equation}{section}
\theoremstyle{plain}
\newtheorem{theorem}{\bf Theorem}[section]
\newtheorem{theoremA}{\bf Theorem}
\newtheorem{lemma}[theorem]{\bf Lemma}
\theoremstyle{definition}
\theoremstyle{remark}
\newtheorem*{remark*}{\bf Remark}
\newtheorem{remark}[theorem]{\bf Remark}
\newcommand{\wb}{\overline}
\newcommand{\clos}{\overline}
\newcommand{\wh}{\widehat}
\newcommand{\eps}{\varepsilon}
\newcommand{\1}{1}
\newcommand{\dm}{\,\mathrm{d}}
\renewcommand{\[}{\begin{equation}}
\renewcommand{\]}{\end{equation}}
\begin{document} 
\title{Szeg\H{o} Limit Theorem for Truncated Toeplitz Operators}
\date{\today} 

\author{Nazar Miheisi}
\address{Department of Mathematics,
King's College London,
Strand, London WC2R 2LS,
United Kingdom}
\email{nazar.miheisi@kcl.ac.uk}

\author{Ryan O'Loughlin}
\address{D\'epartement de Math\'ematiques et de Statistique,
Universit\'e Laval,
Qu\'ebec, G1V 0A6,
Canada}
\email{ryan.oloughlin.1@ulaval.ca}

\subjclass[2020]{47B35, 30J10, 30H10}

\keywords{Szeg\H{o} Theorem, truncated Toeplitz operators, model spaces}

\begin{abstract}
We discuss generalizations of the Szeg\H{o} Limit Theorem to truncated Toeplitz
operators. In particular, we consider compressions of Toeplitz operators to an
increasing sequence of finite dimensional model spaces. We present two theorems.
The first is a new variant of the Szeg\H{o} Limit Theorem in this setting. The
second relates to the variant given by Strouse-Timotin-Zarrabi in \cite{STZ1},
and characterizes the sequences for which that result holds.
\end{abstract}

\maketitle


\section{Introduction}

Throughout, $\D$ will be the unit disk and $\T=\partial\D$ will be the unit circle.
For $\varphi\in L^\infty(\T)$, the Toeplitz operator $T(\varphi)$ is the compression
of multiplication by $\varphi$ on the Hardy space $H^2=H^2(\D)$. Then the matrix
of $T(\varphi)$ with respect to the standard basis $\{z^j\}_{j\geq0}$ of $H^2$ is
$\{\wh\varphi(j-k)\}_{j,k\geq0}$, where $\wh\varphi(k)$ is the $k^\mathrm{th}$
Fourier coefficient of $\varphi$. One of the principal results in the theory of
Toeplitz operators is the \emph{Szeg\H{o} Limit Theorem}\footnote{This is sometimes
called the First Szeg\H{o} Limit Theorem to distinguish it from the Strong Szeg\H{o}
Limit Theorem which gives a second order asymptotic.} which describes the asymptotic
spectral density of the $N\times N$ truncations
$T_N(\varphi)=\{\wh\varphi(j-k)\}_{j,k=0}^{N-1}$.

\begin{theoremA}[Szeg\H{o} Limit Theorem]\label{thm:SLT-classic}
Let $\varphi\in L^\infty(\T)$ and let $f$ be an analytic function on the closed convex
hull of the essential range of $\varphi$. Then
$$
\lim_{N\to\infty} \frac{1}{N}\Tr f(T_N(\varphi))
=
\int_\T f(\varphi(\zeta))\dm\zeta.
$$
\end{theoremA}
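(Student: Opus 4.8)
The plan is to run the classical moment (trace) method. Write $P_N$ for the orthogonal projection of $L^2(\T)$ onto $\Span\CBr{1,z,\dots,z^{N-1}}$, so that $T_N(\varphi)=P_NM_\varphi P_N$ on $\Ran P_N$, where $M_\varphi$ is multiplication by $\varphi$. First I would reduce to the case $f(z)=z^k$. Since the numerical range of the compression $T_N(\varphi)$ is contained in that of the normal operator $M_\varphi$, whose numerical range has closure equal to the (compact, convex) convex hull $K$ of the essential range of $\varphi$, we get $\spec T_N(\varphi)\subseteq K$ for every $N$; as $K$ is polynomially convex, Runge's theorem produces polynomials $p_n\to f$ uniformly on $K$. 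With $\lambda_1,\dots,\lambda_N\in K$ the eigenvalues of $T_N(\varphi)$ (counted with multiplicity),
\[
\Abs{\tfrac1N\Tr f(T_N(\varphi))-\tfrac1N\Tr p_n(T_N(\varphi))}=\Abs{\tfrac1N\sum_{l=1}^N(f-p_n)(\lambda_l)}\le\sup_K\abs{f-p_n},
\]
and likewise $\abs{\int_\T f(\varphi)\dm\zeta-\int_\T p_n(\varphi)\dm\zeta}\le\sup_K\abs{f-p_n}$, so an $\eps/3$ argument reduces the theorem (using linearity of the trace and of $g\mapsto\int_\T g(\varphi)\dm\zeta$) to the case $f(z)=z^k$, $k\ge0$.

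The reason monomials are convenient is that $\Tr T_N(\psi)=\sum_{l=0}^{N-1}\jap{M_\psi z^l,z^l}=N\wh\psi(0)=N\int_\T\psi\dm\zeta$ for every $\psi\in L^\infty(\T)$; applying this with $\psi=\varphi^k$ gives $\tfrac1N\Tr T_N(\varphi^k)=\int_\T\varphi^k\dm\zeta$ \emph{exactly}, so it remains only to show $\tfrac1N\bigl(\Tr T_N(\varphi)^k-\Tr T_N(\varphi^k)\bigr)\to0$. For this I would set $R_N:=I-P_N$ (a projection in $L^2$) and $E_j:=T_N(\varphi)T_N(\varphi^j)-T_N(\varphi^{j+1})=-P_NM_\varphi R_NM_{\varphi^j}P_N$, prove by induction the telescoping identity $T_N(\varphi)^k=T_N(\varphi^k)+\sum_{j=1}^{k-1}T_N(\varphi)^{k-1-j}E_j$, and thereby reduce, since there are only finitely many terms, to bounding for each fixed $j$
\[
\Tr\Br{(P_NM_\varphi P_N)^{k-1-j}P_NM_\varphi R_NM_{\varphi^j}P_N}=\Tr\Br{(R_NM_{\varphi^j}P_N)\,(P_NM_\varphi P_N)^{k-1-j}(P_NM_\varphi R_N)},
\]
where the equality uses $R_N=R_N^2$ and cyclicity of the trace.

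The crux — and the only step I expect to need real work — is the estimate $\norm{R_NM_gP_N}_{\mathrm{HS}}=o(\sqrt N)$ for every $g\in L^\infty(\T)$. Reading off matrix entries gives the exact formula $\norm{R_NM_gP_N}_{\mathrm{HS}}^2=\sum_{i\neq0}\min(\abs i,N)\,\abs{\wh g(i)}^2$, whence $\tfrac1N\norm{R_NM_gP_N}_{\mathrm{HS}}^2=\sum_{i\neq0}\tfrac{\min(\abs i,N)}{N}\abs{\wh g(i)}^2\to0$ by dominated convergence, each summand tending to $0$, being dominated by $\abs{\wh g(i)}^2$, and $g\in L^2(\T)$. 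Applying this with $g=\varphi^j$ to the first factor above, and with $g=\bar\varphi$ to $P_NM_\varphi R_N=(R_NM_{\bar\varphi}P_N)^*$ in the last factor — whose remaining factor $(P_NM_\varphi P_N)^{k-1-j}$ has operator norm at most $\norm\varphi_\infty^{k-1-j}$ — the estimate $\abs{\Tr(XY)}\le\norm X_{\mathrm{HS}}\norm Y_{\mathrm{HS}}$ makes each term $o(N)$, giving $\tfrac1N\bigl(\Tr T_N(\varphi)^k-\Tr T_N(\varphi^k)\bigr)\to0$ and finishing the proof. It is precisely in this last estimate that the hypothesis $\varphi\in L^\infty\subset L^2$ is used; the two reductions are routine once one has the numerical-range inclusion and Runge's theorem.
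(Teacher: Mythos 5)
Your proof is correct: the numerical-range inclusion gives $\spec T_N(\varphi)\subseteq\conv\calR(\varphi)$, the reduction to monomials via eigenvalue sums and Runge is sound, the telescoping identity and the exact formula $\Tr T_N(\varphi^k)=N\int_\T\varphi^k\dm\zeta$ are right, and the Fourier computation $\norm{R_NM_gP_N}_{\bS_2}^2=\sum_{i\neq0}\min(|i|,N)|\wh g(i)|^2=o(N)$ together with $|\Tr(XYZ)|\leq\norm{X}_{\bS_2}\norm{Y}\norm{Z}_{\bS_2}$ closes the argument. It is, however, a genuinely different route from the one taken in this paper. Here Theorem \ref{thm:SLT-classic} is treated as classical (cited to B\"ottcher--Silbermann and Nikolski) and is recovered as the special case $\lambda_j\equiv0$ of Theorem \ref{thm:SLT} (then $\nu_N=m$ and $\calX_\Lambda=L^2(\T)$), whose proof runs by ``approximation by circulants'': one shows via the Fej\'er-type operators $E_N$ and the Clark disintegration \eqref{eq:op-disintegration} that $N^{-1}\int_\T\|T_{B_N}(\varphi)-\varphi(U_\alpha^{B_N})\|_{\bS_2}^2\dm\alpha\to0$ (Lemma \ref{lem:approx}), transfers this to $f(T_{B_N}(\varphi))-T_{B_N}(f\circ\varphi)$ by a Riesz--Dunford resolvent integral rather than by reducing $f$ to polynomials, and then uses the trace formula \eqref{eq:trace-formula}. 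Both arguments ultimately rest on an $o(N)$ Hilbert--Schmidt bound, but yours exploits the explicit Fourier-basis computation of $\norm{R_NM_gP_N}_{\bS_2}$, which is what makes it elementary and self-contained for $B_N=z^N$ yet has no direct analogue for general Blaschke products; the paper's Clark-measure machinery is precisely what replaces that computation and buys the extension to arbitrary zero sequences $\Lambda$ and symbols in $L^\infty(\T)\cap\calX_\Lambda$, at the cost of more background (Clark unitaries, Aleksandrov disintegration). Your polynomial-reduction step via eigenvalue sums parallels the paper's treatment of merely continuous $f$ for real symbols, so the two proofs are close in spirit at that point, but the core approximation mechanisms differ.
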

For suitable $\varphi$, we can take $f(x)=\log x$ in Theorem \ref{thm:SLT-classic} to
obtain a version of the Szeg\H{o} Limit Theorem in terms of the Toeplitz determinants
$\det T_N(\varphi)$. One can also go in the other direction and deduce Theorem
\ref{thm:SLT-classic} from the determinant version. Another variant is the Strong
Szeg\H{o} Limit Theorem which, under additional regularity assumptions on $\varphi$,
gives a second order asymptotic expansion for $\Tr f(T_N(\varphi))$. We refer to
\cite[\S 5]{BotSil1} or \cite[\S 5]{Nikolski1} for a more detailed discussion.

Let $B\in H^2$ be an inner function -- that is, $|B|=1$ almost everywhere on $\T$. The
\emph{truncated Toeplitz operator} $T_B(\varphi)$ on the model space $K_B :=
H^2\ominus B H^2$ is the compression of $T(\varphi)$ to $K_B$; we say $\varphi$ is
the \emph{symbol} of $T_B(\varphi)$. Observe that $T_N(\varphi)$ above is precisely
the matrix of $T_B(\varphi)$ with $B(z)=z^N$.

This paper is concerned with generalizations of the Szeg\H{o} Limit Theorem to truncated
Toeplitz operators. In this setting, we consider a sequence $\{\lambda_j\}_{j\geq0}$
in $\D$ and look for an analogue of Theorem \ref{thm:SLT-classic} for the operators
$T_{B_N}(\varphi)$, where $B_N$ is the Blaschke product with zeros $\lambda_0,\dots,
\lambda_{N-1}$. As far as we are aware, two results of this type exist in the literature.
The first is a result of B\"{o}ttcher generalizing the determinant version of the Strong
Szeg\H{o} Limit Theorem \cite{Bottcher1}. The second is a result of Strouse, Timotin and
Zarrabi (Theorem \ref{thm:STZ} below) which gives a trace version of Theorem
\ref{thm:SLT-classic} for continuous symbols, and which is valid when 
$\{\lambda_j\}_{j\geq0}$ is not a Blaschke sequence \cite{STZ1}; for Blaschke sequences
they give only partial results. The aims of this paper are twofold:
\begin{enumerate}[topsep=0pt, itemsep=3pt]
\item
We give an alternative version of the  Szeg\H{o} Limit Theorem which is valid for a
wider class of symbols and all sequences $\{\lambda_j\}_{j\geq0}$;
\item
We characterise those sequences $\{\lambda_j\}_{j\geq0}$ for which the
Strouse-Timotin-Zarrabi theorem holds.
\end{enumerate}
The precise statements are Theorems \ref{thm:SLT} and \ref{thm:angular} below.
The second point addresses a problem raised in \cite{STZ1}.

\subsection{Notation}
Before continuing, we fix some basic notation that is used throughout the
remainder of the paper.
\begin{enumerate}[label=\arabic*.]
\item
The symbol $C$ will always denote a positive constant whose precise value may change
from one occurrence to another.
\item
Normalized Lebesgue measure on $\T$ will be denoted by $m$, although we will simply write
$\dm\zeta$ instead of $\dm m(\zeta)$.
\item
For a function $\varphi$ defined on $\T$, the essential range
of $\varphi$ is denoted by $\calR(\varphi)$ and its closed convex hull by
$\conv\calR(\varphi)$.
\item
The identity function on $\D$ is denoted by $z$, i.e. $z(w)=w$ for all $w\in\D$.
\item
For $\varphi, \psi\in L^2(\T)$, the rank one operator $f\mapsto (f, \psi)\varphi$
is denoted $\varphi\otimes\psi$.
\item
For $p>0$, $\bS_p$ will denote the usual Schatten class of operators on a Hilbert
space. In particular, $\bS_1$ and $\bS_2$ are the trace class and Hilbert-Schmidt
operators respectively.
\end{enumerate}


\subsection*{Acknowledgement}

We are grateful to Yaqub Alwan for  useful discussions with the first author.


\section{Main results}

In the remainder of the paper, we let $\Lambda = \{\lambda_j\}_{j\geq0}$ be
a sequence in $\D$. For simplicity, we will suppose that $\lambda_0=0$. For
$N\in\N$, we set
$$
B_N(\zeta)=\prod_{j=0}^{N-1} \frac{\wb{\lambda_j}}{|\lambda_j|}
\frac{\zeta-\lambda_j}{1-\wb{\lambda_j}z},
$$
with the convention that $\wb{\lambda_j}/|\lambda_j|=1$ when $\lambda_j=0$.
In addition, we define the set
$$
\calE =
\CBr{\zeta\in\T :\, \lim_{N\to\infty} |B_N'(\zeta)|<\infty}.
$$
If $\Lambda$ is a Blaschke sequence and $B$ is the corresponding Blaschke product,
this is precisely the set of points at which $B$ has a unimodular bounadry value
and finite angular derivative. If $\Lambda$ is not a Blaschke sequence, $\calE$
will be empty.

\subsection{Szeg\H{o} Limit Theorem for truncated Toeplitz operators}

Let $\nu_N$ be the mean of the harmonic measures on $\T$ corresponding to the points
$\lambda_0,\dots,\lambda_{N-1}$, i.e.
$$
\dm\nu_N(\zeta) = \frac{1}{N}|B_N'(\zeta)|\dm \zeta
= \frac{1}{N}\sum_{j=0}^{N-1}\frac{1-|\lambda_j|^2}{|\zeta-\lambda_j|^2}\dm\zeta.
$$
Then the space $\calX_\Lambda$ is defined to be the closure of $C(\T)$ in the
norm
$$
\norm{f}_\Lambda = \sup_{N\geq1} \norm{f}_{L^2(\nu_N)}.
$$
For example, if $|\lambda_j|\leq r<1$ for all $j$, then $\calX_\Lambda = L^2(\T)$.
More generally, if $\Lambda$ does not accumulate at every point of $\T$,
then $\calX_\Lambda$ contains all $f$ in $L^2(\T)$ that are continuous on a
neighbourhood of $\clos{\Lambda}\cap\T$. We can now state our first result.

\begin{theorem}\label{thm:SLT}
Let $\varphi\in L^\infty(\T)\cap\calX_\Lambda$ and let $f$ be an analytic
function on a neighbourhood of $\conv\calR(\varphi)$. Then
\[
\lim_{N\to\infty}\Abs{\frac{1}{N}\Tr f(T_{B_N}(\varphi))
- \int_\T f(\varphi(\zeta))\dm\nu_N(\zeta)}
=0.
\label{eq:SLT}
\]
Moreover, if $\varphi$ is real-valued, \eqref{eq:SLT} holds for any $f$ which is
continuous on $[\ess\inf\varphi,\ess\sup\varphi]$.
\end{theorem}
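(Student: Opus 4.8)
The plan is to reduce everything to the case of polynomial $f$ and then exploit an approximation argument in the symbol. First I would observe that the map $f \mapsto \frac1N \Tr f(T_{B_N}(\varphi))$ and the map $f \mapsto \int_\T f(\varphi)\dm\nu_N$ are both "averages" in a suitable sense: the former is $\frac1N$ times a sum of $f$ evaluated at the eigenvalues of $T_{B_N}(\varphi)$, and the eigenvalues lie in $\conv\calR(\varphi)$ (by a standard numerical-range argument, since $T_{B_N}(\varphi)$ is a compression of multiplication by $\varphi$). Hence both quantities, viewed as functionals in $f$, are bounded on a fixed compact neighbourhood of $\conv\calR(\varphi)$ uniformly in $N$. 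By a normal-families / Montel argument for functions analytic on a neighbourhood of a compact set, it suffices to prove \eqref{eq:SLT} for $f$ a polynomial, indeed (after subtracting constants and using linearity) for the monomials $f(x)=x^n$, $n\geq1$.

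For $f(x)=x^n$ the statement becomes
\[
\lim_{N\to\infty}\Abs{\frac1N\Tr\big(T_{B_N}(\varphi)^n\big) - \frac1N\Tr\big(T_{B_N}(\varphi^n)\big)} = 0,
\]
since $\int_\T \varphi^n \dm\nu_N = \frac1N\Tr T_{B_N}(\varphi^n)$ (the trace of a truncated Toeplitz operator with symbol $\psi$ against the reproducing kernels is exactly $\int_\T \psi\dm\nu_N$). The key algebraic input is the commutator/Hankel-type identity $T_{B_N}(\psi\chi) = T_{B_N}(\psi)T_{B_N}(\chi) + (\text{defect terms})$; iterating gives $T_{B_N}(\varphi)^n - T_{B_N}(\varphi^n)$ as a sum of products involving these defect terms. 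Concretely one wants to show $\frac1N\Tr\big(T_{B_N}(\varphi)^n - T_{B_N}(\varphi^n)\big)\to0$, which by the telescoping identity reduces to showing that $\frac1N$ times the trace (or trace-norm) of finitely many products of the form $P_{K_{B_N}}M_\varphi(I-P_{K_{B_N}})M_\varphi P_{K_{B_N}}\cdots$ tends to zero. The natural estimate is via Hilbert--Schmidt norms: one shows $\frac1N\norm{H}_{\bS_2}^2\to0$ for the relevant defect operator $H$, where the $\bS_2$-norm is computed by integrating $|\varphi|^2$ (or its modulus of continuity) against $\nu_N$ and against the "orthogonal complement" weight — this is exactly where the hypothesis $\varphi\in\calX_\Lambda$ enters, since $\norm{\varphi}_\Lambda<\infty$ controls $\int_\T|\varphi|^2\dm\nu_N$ uniformly, and approximating $\varphi$ in $\calX_\Lambda$ by continuous functions makes the off-diagonal mass vanish asymptotically.

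I expect the main obstacle to be the Hilbert--Schmidt estimate on the defect terms: one must show that the "defect" in the multiplicativity of $T_{B_N}$ is small \emph{on average}, i.e. after dividing by $N$, and this requires a careful analysis of $(I-P_{K_{B_N}})M_\varphi P_{K_{B_N}}$ — a Hankel-type operator with symbol $\varphi$ compressed to the model space — whose Hilbert--Schmidt norm must be estimated by the $\calX_\Lambda$-norm of $\varphi$ together with a smallness coming from continuity. The standard trick is to split $\varphi = \varphi_{\mathrm{cont}} + (\varphi - \varphi_{\mathrm{cont}})$ with $\varphi_{\mathrm{cont}}\in C(\T)$: for the continuous part, the defect operator $T_{B_N}(\psi_1\psi_2) - T_{B_N}(\psi_1)T_{B_N}(\psi_2)$ should have trace-norm that is $o(N)$ because the reproducing kernels of $K_{B_N}$ concentrate near $\T$ where a continuous symbol is locally nearly constant; for the remainder, one uses that its $\calX_\Lambda$-norm is small, bounding the $\bS_2$-norm of the associated operators directly. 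Finally, for the last assertion (real-valued $\varphi$, $f$ merely continuous on $[\ess\inf\varphi,\ess\sup\varphi]$), I would note that $T_{B_N}(\varphi)$ is self-adjoint with spectrum in that interval, approximate $f$ uniformly by polynomials via Weierstrass, and observe that both functionals in \eqref{eq:SLT} are bounded by $\sup_{[\ess\inf\varphi,\ess\sup\varphi]}|f|$ uniformly in $N$, so the polynomial case passes to the limit.
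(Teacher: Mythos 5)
Your route is genuinely different from the paper's: the paper approximates $T_{B_N}(\varphi)$ on average in $\bS_2$ (over the Clark parameter $\alpha$) by the operators $\varphi(U_\alpha^{B_N})$, which play the role of circulants, and treats analytic $f$ directly via a resolvent integral; you instead reduce to monomials and estimate semicommutator defects, as in the classical Toeplitz-matrix proof. The reduction to polynomials is sound, but the tool is Runge/Mergelyan rather than ``normal families / Montel'': $\conv\calR(\varphi)$ is compact and convex, the eigenvalues of $T_{B_N}(\varphi)$ lie in it by the numerical-range argument, both functionals are dominated by $\sup_{\conv\calR(\varphi)}|f|$ uniformly in $N$ (the trace because $\Tr f(T_{B_N}(\varphi))=\sum_j f(\lambda_j)$, the integral because $\nu_N$ is a probability measure), and an analytic function on a neighbourhood of a compact convex set is a uniform limit of polynomials there. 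Your treatment of real-valued $\varphi$ with continuous $f$ coincides with the paper's.

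The genuine gap is the central estimate, which you flag but do not prove: that the Hankel-type defect $(I-P_{B_N})M_\varphi P_{B_N}$ (with $P_{B_N}$ the orthogonal projection of $L^2(\T)$ onto $K_{B_N}$ and $M_\varphi$ multiplication by $\varphi$) satisfies $\Norm{(I-P_{B_N})M_\varphi P_{B_N}}_{\bS_2}^2=o(N)$, and likewise with $\varphi$ replaced by $\wb\varphi$ and by the powers $\varphi^k$ that your telescoping produces (so you also need, and should state, that $\calX_\Lambda\cap L^\infty(\T)$ is stable under products). Your justification for the continuous part --- that the reproducing kernels of $K_{B_N}$ ``concentrate near $\T$ where a continuous symbol is locally nearly constant'' --- is not an argument, and it is exactly here that the difficulty of a general sequence $\Lambda$ sits. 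What actually makes it work is the identity (obtained by the same computation as in the paper's Lemma \ref{lem:approx}, using \eqref{eq:trace-formula})
\begin{equation*}
\frac{1}{N}\Norm{(I-P_{B_N})M_\varphi P_{B_N}}_{\bS_2}^2
=
\int_\T \wb{\varphi(\zeta)}\Br{\varphi(\zeta)-E_N\varphi(\zeta)}\dm\nu_N(\zeta),
\end{equation*}
where $E_N\varphi(\zeta)=\int_\T\varphi(\eta)|\wh{k}^{B_N}_\zeta(\eta)|^2\dm\eta$, combined with the paper's Lemma \ref{lem:fejer}: $E_N$ is a contraction on $L^2(\nu_N)$, and for Lipschitz $g$ one has $\Norm{E_Ng-g}_{L^2(\nu_N)}^2=O(1/N)$ because $|g(\eta)-g(\zeta)|^2\,|\wh{k}^{B_N}_\zeta(\eta)|^2\,|B_N'(\zeta)|\leq |g(\eta)-g(\zeta)|^2\,|B_N(\eta)-B_N(\zeta)|^2/|\eta-\zeta|^2$ is integrable uniformly in $N$; by density this gives $\Norm{E_N\varphi-\varphi}_{L^2(\nu_N)}\to0$ for every $\varphi\in\calX_\Lambda$, hence the displayed quantity tends to $0$ by Cauchy--Schwarz. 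Your splitting $\varphi=\varphi_{\mathrm{cont}}+(\varphi-\varphi_{\mathrm{cont}})$ handles the remainder correctly (since $\frac1N\Norm{M_\psi P_{B_N}}_{\bS_2}^2=\int_\T|\psi|^2\dm\nu_N\leq\norm{\psi}_\Lambda^2$), but for the continuous part you still need the Fej\'er-type lemma above (via Lipschitz approximation), not local constancy of the symbol; without it, or an equivalent quantitative statement, the proof does not close.
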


\begin{remark}
Unlike in the classical case, the sequence $N^{-1}\Tr f(T_{B_N}(\varphi))$ need not
converge. For example, fix $\lambda\in(0,1)$ and define $\lambda_j\in\D$ as follows:
$$
\lambda_0=0,
\quad
\lambda_j = \begin{cases}
\lambda, \quad 3^{k-1}< j \leq 3^k \;\text{with}\; k \;\text{odd}, \\
-\lambda, \quad 3^{k-1}< j \leq 3^k \;\text{with}\; k \;\text{even}.
\end{cases}
$$
Then for $f=\varphi=z$, an easy calculation shows that for $N=3^k$
$$
\int_\T f(\varphi(\zeta))\dm\nu_N(\zeta) \geq \lambda/3 \quad \text{when $k$ is odd},
$$
and
$$
\int_\T f(\varphi(\zeta))\dm\nu_N(\zeta) \leq -\lambda/3 \quad \text{when $k$ is even}.
$$
However, if $\nu_N$ converges in the weak-$*$ topology to some measure $\nu$, then we
will have that 
\[
\lim_{N\to\infty}\frac{1}{N}\Tr f(T_{B_N}(\varphi))
=
\int_\T f(\varphi(\zeta))\dm\nu(\zeta)
\label{eq:SLT-convergence}
\]
for $\varphi$ and $f$ as in the statement of Theorem \ref{thm:SLT}. Observe that integration
with respect to $\nu$ necessarily extends to a bounded functional on $\calX_\Lambda$,
and since $\calX_\Lambda$ is stable under analytic compositions, the right
hand side of \eqref{eq:SLT-convergence} is well-defined.
\end{remark}

\subsection{The Theorem of Strouse-Timotin-Zarrabi}

Let us introduce the functions
$$
\beta_N(\zeta)=\frac{1}{|B_N'(\zeta)|}.
$$
Strouse, Timotin and Zarrabi proved the following extension of the Szeg\H{o}
Limit Theorem \cite{STZ1}:

\begin{theoremA}[Strouse-Timotin-Zarrabi]\label{thm:STZ}
Assume $\sum (1-|\lambda_j|) = \infty$. Let $\varphi\in C(\T)$ and let $f$ be an
analytic function on a neighbourhood of $\conv\calR(\varphi)$. Then
\[
\lim_{N\to\infty}\Tr \SBr{T_{B_N}(\beta_N)f(T_{B_N}(\varphi))}
=
\int_\T f(\varphi(\zeta))\dm \zeta.
\label{eq:STZ}
\]
Moreover, if $\varphi$ is real-valued \eqref{eq:STZ} holds for any $f$ which is
continuous on $[\inf\varphi, \sup\varphi]$.
\end{theoremA}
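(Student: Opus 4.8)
The plan is to reduce in two stages --- first to monomials $f(x)=x^n$, then to trigonometric polynomial symbols --- and to exploit two structural features of the set-up: $T_{B_N}(\beta_N)$ is positive with $\Tr T_{B_N}(\beta_N)=1$, and multiplication by a trigonometric polynomial perturbs the model space $K_{B_N}$ only by an operator of rank bounded independently of $N$.

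I would first record the quantitative inputs. Write $P_N$ for the orthogonal projection of $L^2(\T)$ onto $K_{B_N}$. Computing $\Tr T_{B_N}(\psi)$ in the Malmquist--Walsh basis $\{e_j\}_{j=0}^{N-1}$ of $K_{B_N}$ and using $\sum_{j=0}^{N-1}|e_j(\zeta)|^2=|B_N'(\zeta)|$ on $\T$ gives $\Tr T_{B_N}(\psi)=\int_\T\psi\,|B_N'|\dm\zeta$ for all $\psi\in L^\infty(\T)$. In particular $\Tr T_{B_N}(\beta_N)=1$; since $\beta_N>0$ on $\T$ the operator $T_{B_N}(\beta_N)$ is positive, so $\norm{T_{B_N}(\beta_N)}_{\bS_1}=1$ and hence $\abs{\Tr[T_{B_N}(\beta_N)A]}\le\norm{A}$ for every operator $A$ on $K_{B_N}$, uniformly in $N$. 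Also $|B_N'(\zeta)|=\sum_{j=0}^{N-1}\tfrac{1-|\lambda_j|^2}{|\zeta-\lambda_j|^2}\ge\tfrac14\sum_{j=0}^{N-1}(1-|\lambda_j|)$, so the hypothesis $\sum(1-|\lambda_j|)=\infty$ forces $\norm{\beta_N}_\infty\to 0$, whence $\norm{T_{B_N}(\beta_N)}\to 0$.

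Next I would reduce to monomials. From $\abs{\Tr[T_{B_N}(\beta_N)A]}\le\norm{A}$ and $\abs{\int_\T(f-p)(\varphi(\zeta))\dm\zeta}\le\sup_{\conv\calR(\varphi)}|f-p|$, it suffices to prove \eqref{eq:STZ} for $f$ a polynomial, hence by linearity for each $f(x)=x^n$ (the case $n=0$ being $\Tr T_{B_N}(\beta_N)=1$). To pass from polynomials to a general $f$ analytic on a neighbourhood $U$ of $K:=\conv\calR(\varphi)$, fix $\delta>0$ with $K_\delta:=\{z:\dist(z,K)\le\delta\}\subset U$; since $\spec T_{B_N}(\varphi)$ lies in the numerical range of $T_{B_N}(\varphi)$, which is contained in $K$, the Riesz--Dunford calculus along $\Gamma:=\partial K_\delta$ and the bound $\norm{(z-T_{B_N}(\varphi))^{-1}}\le 1/\dist(z,K)\le1/\delta$ for $z\in\Gamma$ give $\norm{(f-p)(T_{B_N}(\varphi))}\le\frac{|\Gamma|}{2\pi\delta}\sup_\Gamma|f-p|$, small uniformly in $N$ once $p$ approximates $f$ uniformly on $K_\delta$ (Runge). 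When $\varphi$ is real-valued, $T_{B_N}(\varphi)$ is self-adjoint with spectrum in $[\inf\varphi,\sup\varphi]$ and one uses Weierstrass approximation instead.

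It then remains to prove, for fixed $n\ge1$ and $\varphi\in C(\T)$, that $\Tr[T_{B_N}(\beta_N)T_{B_N}(\varphi)^n]\to\int_\T\varphi^n\dm\zeta$. The decisive step is the lemma that, for a trigonometric polynomial $\psi$ of degree $d$, the operator $(I-P_N)M_\psi P_N$ has rank at most $2d$ uniformly in $N$: decomposing $I-P_N$ into the projections onto $L^2(\T)\ominus H^2$ and onto $B_NH^2$, the first piece has rank $\le d$ since only the frequencies $-1,\dots,-d$ of $\psi\cdot H^2$ survive, while in the second the coanalytic part of $\psi$ contributes nothing ($K_{B_N}$ being backward-shift invariant) and for the analytic part an induction --- using the shift-invariance of $B_NH^2$ and the fact that $f\mapsto(I-P_N)(zf)$ maps $K_{B_N}$ into $\C B_N$ (because $B_N(0)=0$) --- shows the range of $P_{B_NH^2}M_{z^j}P_N$ is spanned by $B_N,zB_N,\dots,z^{j-1}B_N$, of dimension $\le d$. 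Granting this, for a trigonometric polynomial $\psi$ I would combine the telescoping identity
\[
T_{B_N}(\psi^n)-T_{B_N}(\psi)^n=\sum_{k=1}^{n-1}(P_NM_\psi P_N)^{k-1}P_NM_\psi(I-P_N)M_{\psi^{n-k}}P_N
\]
with $T_{B_N}(\beta_N)T_{B_N}(\psi^n)-T_{B_N}(\beta_N\psi^n)=-P_NM_{\beta_N}(I-P_N)M_{\psi^n}P_N$ and $\Tr T_{B_N}(\beta_N\psi^n)=\int_\T\psi^n\dm\zeta$ to write $\Tr[T_{B_N}(\beta_N)T_{B_N}(\psi)^n]-\int_\T\psi^n\dm\zeta$ as a finite sum of traces of operators of rank $O_{n,d}(1)$ and operator norm $O_{n,\psi}(\norm{\beta_N}_\infty)$, which therefore tends to $0$. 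Finally, for general $\varphi\in C(\T)$ I would take a trigonometric polynomial $\psi$ with $\norm{\psi}_\infty\le\norm{\varphi}_\infty$ and $\norm{\varphi-\psi}_\infty$ as small as desired (Fej\'er) and use $\norm{T_{B_N}(\beta_N)}_{\bS_1}=1$ to bound $\abs{\Tr[T_{B_N}(\beta_N)(T_{B_N}(\varphi)^n-T_{B_N}(\psi)^n)]}\le n\norm{\varphi}_\infty^{n-1}\norm{\varphi-\psi}_\infty$, together with the matching bound for $\int_\T(\varphi^n-\psi^n)\dm\zeta$. The step I expect to cost the most effort is the uniform rank bound for $(I-P_N)M_\psi P_N$; once it is in place, the rest is standard reductions and routine estimates.
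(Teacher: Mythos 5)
Your argument is correct, and it takes a genuinely different route from the one in (and behind) the paper. This theorem is quoted from Strouse--Timotin--Zarrabi, and both their proof and the machinery this paper builds around it (the proof of Theorem \ref{thm:angular}, \ref{angular-2}$\Rightarrow$\ref{angular-3}) run through Clark unitary operators and Clark measures: the key ingredients there are the disintegration $T_{B_N}(\varphi)=\int_\T\varphi(U^{B_N}_\alpha)\dm\alpha$, the Fej\'er-type operators $E_N$ with $\Tr[T_{B_N}(\beta_N)T_{B_N}(\psi)]=\int_\T E_N\psi\dm\zeta$, and the uniform $\bS_1$ bound on semicommutators $T_B(\varphi)T_B(\psi)-T_B(\varphi\psi)$ for trigonometric polynomials (Lemma \ref{lem:approx-product}), combined via Lemma \ref{lem:approx-STZ}. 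You avoid Clark theory entirely: your uniform rank bound for $(I-P_N)M_\psi P_N$ is essentially the same structural fact as Lemma \ref{lem:approx-product} (indeed $T_{B_N}(\varphi)T_{B_N}(\psi)-T_{B_N}(\varphi\psi)=-P_NM_\varphi(I-P_N)M_\psi P_N$, so uniform rank plus uniform norm gives the uniform trace-norm bound), but you exploit it together with the exact identity $\Tr T_{B_N}(\beta_N\psi)=\int_\T\psi\dm\zeta$ and the uniform decay $\norm{\beta_N}_\infty\le 4\big(\sum_{j<N}(1-|\lambda_j|)\big)^{-1}\to0$ in place of the $E_N\to\mathrm{id}$ convergence. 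What each approach buys: yours is more elementary and self-contained, and it uses exactly the strength of the non-Blaschke hypothesis (uniform divergence of $|B_N'|$ on $\T$); the Clark-measure/$E_N$ route is what allows the paper to handle the Blaschke case, where only almost-everywhere divergence of $|B_N'|$ is available and the relevant smallness is $\norm{\beta_N}_{L^\infty(\mu^{B_N}_\alpha)}\to0$ for a.e.\ $\alpha$ rather than $\norm{\beta_N}_\infty\to0$. Two small remarks: your parenthetical ``because $B_N(0)=0$'' in the rank lemma is unnecessary --- $(I-P_N)(zf)\in\C B_N$ for $f\in K_{B_N}$ follows from $zf\perp zB_NH^2$ and $B_NH^2\ominus zB_NH^2=\C B_N$ for any inner $B_N$ --- and your reduction to polynomial $f$ via the numerical-range resolvent bound $\norm{(\lambda I-T_{B_N}(\varphi))^{-1}}\le1/\dist(\lambda,\conv\calR(\varphi))$ is fine since $\conv\calR(\varphi)$ is convex, so its $\delta$-neighbourhood is convex and Runge applies.
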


The case of a Blaschke sequence is more subtle. This is because the proof relies on
the condition
\[
\norm{\beta_N(U_\alpha^{B_N})}\to0
\quad\text{for almost every}\; \alpha\in\T,
\label{eq:STZ-condition}
\]
where $\{(U_\alpha^{B_N}): \alpha\in\T\}$ are the Clark unitary operators associated
to $B_N$ (a definition is given in \S \ref{subsec:Clark}). This condition is
automatically satisfied for non-Blaschke sequences, but might not hold for a Blaschke
sequence. In fact, Strouse, Timotin and Zarrabi proved that if \eqref{eq:STZ-condition}
is satisfied, the conclusions of Theorem \ref{thm:STZ} remain valid for $\varphi$ in
the uniform closure of
$$
\bigcup_{N\geq1} \Br{K_{B_N}+\wb{K_{B_N}}}.
$$
However, they also gave an example of a Blaschke sequence for which
\eqref{eq:STZ-condition} is not satisfied and the conclusion of the theorem fails.
They left open the problem of characterizing sequences for which \eqref{eq:STZ-condition}
holds; our second result gives such a characterization.

\begin{theorem}\label{thm:angular}
The following are equivalent:
\begin{enumerate}[label=(\alph*), itemsep=5pt]
\item\label{angular-1}
For almost every $\alpha\in\T$, 
$\norm{\beta_N(U_\alpha^{B_N})}\to0$ as $N\to\infty$;
\item\label{angular-2}
The Lebesgue measure of $\calE$ is zero, i.e.
$$
\sum_{j=1}^\infty \frac{1-|\lambda_j|^2}{|\zeta-\lambda_j|^2} =\infty
\quad\text{for a.e.}\;\;\zeta\in\T.
$$
\item\label{angular-3}
For every $\varphi\in C(\T)$ and every $f$ which is analytic on a neighbourhood
of $\conv\calR(\varphi)$, \eqref{eq:STZ} holds.
\end{enumerate}
\end{theorem}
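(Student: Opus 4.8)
The plan is to prove $\ref{angular-2}\Leftrightarrow\ref{angular-1}$ first, then $\ref{angular-1}\Rightarrow\ref{angular-3}$, and finally $\ref{angular-3}\Rightarrow\ref{angular-1}$ by contraposition. The key tool is Clark theory for the finite Blaschke products $B_N$: for $\alpha\in\T$ the Clark operator $U_\alpha^{B_N}$ is unitary on the $N$-dimensional space $K_{B_N}$, its spectrum is the set of $N$ (simple) solutions of $B_N(\zeta)=\alpha$ on $\T$, and its Clark measure is $\mu_\alpha^N=\sum_{B_N(\zeta)=\alpha}\beta_N(\zeta)\,\delta_\zeta$. Hence $\beta_N(U_\alpha^{B_N})\geq0$ has eigenvalues $\{\beta_N(\zeta):B_N(\zeta)=\alpha\}$, so $\norm{\beta_N(U_\alpha^{B_N})}$ is the mass of the largest atom of $\mu_\alpha^N$. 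I would also use that $B_N\colon\T\to\T$ is $N$-to-one and measure preserving with Jacobian $\Phi_N:=|B_N'|=\sum_{j<N}\frac{1-|\lambda_j|^2}{|\zeta-\lambda_j|^2}$ (so, writing $\Phi:=\lim_N\Phi_N$, we have $\calE=\{\Phi<\infty\}$), and the trace formula $\Tr T_{B_N}(\psi)=\int_\T\psi\,\Phi_N\,\dm\zeta$, which for $\psi=\beta_N=1/\Phi_N$ gives $\Tr T_{B_N}(\beta_N)=1$ and, since $\beta_N>0$, the uniform bound $\norm{T_{B_N}(\beta_N)}_{\bS_1}=1$.

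Write $F_N(\alpha)=\norm{\beta_N(U_\alpha^{B_N})}$. Since $F_N(\alpha)\geq\eta$ iff $\alpha\in B_N(\{\Phi_N\le1/\eta\})$, the elementary change-of-variables bounds $m(E)\le m(B_N(E))\le(\sup_E\Phi_N)\,m(E)$ yield
\[
m(\{\Phi\le1/\eta\})\ \le\ m(\{F_N\geq\eta\})\ \le\ \tfrac{1}{\eta}\,m(\{\Phi_N\le1/\eta\}).
\]
If $\ref{angular-2}$ fails, $m(\{\Phi\le1/\eta\})>0$ for some $\eta$, so the left bound prevents $m(\{F_N\geq\eta\})\to0$ and $F_N\not\to0$ a.e.; this is $\ref{angular-1}\Rightarrow\ref{angular-2}$. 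Conversely, under $\ref{angular-2}$ we have $m(\{\Phi_N\le1/\eta\})\downarrow m(\{\Phi\le1/\eta\})=0$, so the right bound gives $F_N\to0$ \emph{in measure}, hence $F_{N_k}\to0$ a.e.\ along a subsequence. Upgrading this to the full sequence is the crux; I would do it by passing to the limiting inner function ($B_N\to0$ locally uniformly if $\Lambda$ is not Blaschke, $B_N\to B$ otherwise), so that $\mu_\alpha^N\to\mu_\alpha^\infty$ weak-$*$ for every $\alpha$ (with $\mu_\alpha^\infty=m$ in the non-Blaschke case). Two facts then show $\lim_NF_N(\alpha)$ exists and equals the mass $a(\alpha)$ of the largest atom of $\mu_\alpha^\infty$: a Portmanteau estimate over small arcs gives $\limsup_NF_N(\alpha)\le a(\alpha)$; and persistence of atoms -- an atom of $\mu_\alpha^\infty$ of mass $c$ at $\zeta_0$ (necessarily $\zeta_0\in\calE$, so $\Phi_N(\zeta_0)\uparrow\Phi(\zeta_0)=1/c$ and $B_N$ is a diffeomorphism of a fixed arc about $\zeta_0$ for large $N$) is carried by atoms $\zeta_N\to\zeta_0$ of $\mu_\alpha^N$ of mass tending to $c$ -- gives $\liminf_NF_N(\alpha)\geq a(\alpha)$. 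Hence $F_N\to a$ pointwise, and since $F_{N_k}\to0$ a.e.\ we get $a=0$ a.e., i.e.\ $F_N\to0$ a.e., which is $\ref{angular-1}$.

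For $\ref{angular-1}\Rightarrow\ref{angular-3}$ I would revisit the proof of Theorem \ref{thm:STZ}. Writing $f(T_{B_N}(\varphi))=T_{B_N}(f\circ\varphi)+R_N$ and using $\Tr[T_{B_N}(\beta_N)T_{B_N}(g)]=\Tr T_{B_N}(\beta_N g)-\Tr[P_{B_N}M_{\beta_N}Q_{B_N}M_gP_{B_N}]$ together with $\Tr T_{B_N}\!\big(\beta_N\,(f\circ\varphi)\big)=\int_\T f(\varphi)\,\dm\zeta$, the claim \eqref{eq:STZ} reduces to the vanishing, as $N\to\infty$, of two ``Hankel-type'' corrections tested against $T_{B_N}(\beta_N)$: one from $R_N$, one from $g=f\circ\varphi$. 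Via the Aleksandrov-Clark identity $T_{B_N}(\psi)=\int_\T\psi(U_\alpha^{B_N})\,\dm\alpha$ and the bound $\norm{T_{B_N}(\beta_N)}_{\bS_1}=1$, both corrections are dominated by averages of $\norm{\beta_N(U_\alpha^{B_N})}$, so $\ref{angular-1}$ forces them to $0$; the nonlinear dependence on $f$ is absorbed by the locally Lipschitz analytic functional calculus on operators whose spectra lie in the fixed compact set $\conv\calR(\varphi)$. The point is that STZ already performed these estimates; their restriction to the uniform closure of $\bigcup_N(K_{B_N}+\wb{K_{B_N}})$ is bookkeeping rather than a real limitation, and once $\ref{angular-1}$ is granted the estimates are symbol-independent and give \eqref{eq:STZ} for all $\varphi\in C(\T)$ and all admissible $f$. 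Finally, for $\ref{angular-3}\Rightarrow\ref{angular-1}$: if $\ref{angular-1}$ fails then, by the equivalence above, $m(\calE)>0$ and by the left bound in the display $m(\{\alpha:F_N(\alpha)\geq\eta\})\geq c>0$ for all $N$, so the $\mu_\alpha^\infty$ carry atoms for $\alpha$ in a set of positive measure. Adapting the counterexample of Strouse-Timotin-Zarrabi to this situation -- choosing $\varphi\in C(\T)$ adapted to $\calE$ and $f$ a resolvent $(w-w_0)^{-1}$, $w_0\notin\conv\calR(\varphi)$ -- the surviving atomic contribution keeps $\Tr[T_{B_N}(\beta_N)f(T_{B_N}(\varphi))]$ away from $\int_\T f(\varphi)\,\dm\zeta$, so \eqref{eq:STZ} fails.

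I expect the main obstacle to be the a.e.\ upgrade in $\ref{angular-2}\Rightarrow\ref{angular-1}$: the image sets $B_N(\{\Phi_N\le M\})$ are not nested, so there is no Borel-Cantelli shortcut, and one must identify $\lim_N\norm{\beta_N(U_\alpha^{B_N})}$ via the limiting Clark measure. The delicate point inside that step is the persistence-of-atoms estimate: near an atom $\zeta_0\in\calE$, the $\Phi_N$ can develop large spikes (at the circular projections of $\lambda_j$ approaching $\zeta_0$) while $\Phi_N(\zeta_0)$ stays bounded, so one must isolate the solution $\zeta_N$ of $B_N(\zeta_N)=\alpha$ lying near $\zeta_0$ but away from the spikes. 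Secondary obstacles are checking that STZ's trace estimates are genuinely symbol-independent once $\ref{angular-1}$ holds, and producing the counterexample in $\ref{angular-3}\Rightarrow\ref{angular-1}$ for an arbitrary sequence with $m(\calE)>0$.
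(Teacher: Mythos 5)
Your framework (Clark measures for $B_N$, the identification $\norm{\beta_N(U_\alpha^{B_N})}=\norm{\beta_N}_{L^\infty(\mu_\alpha^{B_N})}$ with the largest atom of $\mu_\alpha^{B_N}$, the trace formula) is the right one, but as written three of the four implications contain genuine gaps. First, in \ref{angular-1}$\Rightarrow$\ref{angular-2} the ``elementary change-of-variables bound'' $m(E)\leq m(B_N(E))$ is false in general: $B_N$ is $N$-to-one and can fold several preimage pieces onto a common image, so one only gets $m(B_N(E))\geq m(E)/N$ for arbitrary $E$. It is repairable for the specific sets $\{|B_N'|\leq 1/\eta\}$, because $\sum_{B_N(\zeta)=\alpha}|B_N'(\zeta)|^{-1}=\mu_\alpha^{B_N}(\T)=1$ caps the multiplicity over such sets by $\floor{1/\eta}$ and yields $m(B_N(E))\geq\eta\, m(E)$; but the paper's route is simpler: $\int_\T\beta_N\dm\zeta=\int_\T\int_\T\beta_N\dm\mu_\alpha^{B_N}\dm\alpha\leq\int_\T\norm{\beta_N(U_\alpha^{B_N})}\dm\alpha\to0$, and $\beta_N$ is decreasing, so $\beta_N\to0$ a.e. The more serious issue is \ref{angular-2}$\Rightarrow$\ref{angular-1}: your argument is load-bearing on the persistence-of-atoms (liminf) estimate needed to identify $\lim_N\norm{\beta_N(U_\alpha^{B_N})}$ with the largest atom of $\mu_\alpha^B$, and you leave exactly that step unproven (and it is genuinely delicate, since $|B_N'|$ can spike near an atom). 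But it is also unnecessary: your limsup/Portmanteau half alone shows that if $\norm{\beta_N(U_\alpha^{B_N})}\nrightarrow0$ then $\mu_\alpha^B$ has an atom, necessarily at a point of $\calE$, so $\mu_\alpha^B(\calE)>0$; Aleksandrov disintegration then gives $m(\calE)=\int_\T\mu_\alpha^B(\calE)\dm\alpha>0$ whenever the bad set of $\alpha$ has positive measure. This is the paper's argument, and no convergence in measure, subsequence extraction, or pointwise limit identification is needed.

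For the implications involving \ref{angular-3}, the proposal defers precisely the new content. In \ref{angular-1}$\Rightarrow$\ref{angular-3}, the claim that STZ's restriction to the uniform closure of $\bigcup_N(K_{B_N}+\wb{K_{B_N}})$ is ``bookkeeping'' is the very thing to be proved, and your mechanism does not supply it: the correction $\Tr\SBr{T_{B_N}(\beta_N)T_{B_N}(g)}-\int_\T g\dm\zeta$ is \emph{not} dominated by averages of $\norm{\beta_N(U_\alpha^{B_N})}$ (note $\Norm{\beta_N(U_\alpha^{B_N})}_{\bS_1}=1$ for all $N,\alpha$, and $\Norm{T_{B_N}(g)-g(U_\alpha^{B_N})}_{\bS_1}$ need not stay bounded in $N$). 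In fact $\Tr\SBr{T_{B_N}(\beta_N)T_{B_N}(g)}=\int_\T E_Ng\dm\zeta$ exactly, where $E_N$ is the Fej\'er-type operator of Lemma \ref{lem:fejer}, and its convergence to $\int_\T g\dm\zeta$ uses the pointwise convergence $E_Ng\to g$ off $\calE$ together with $m(\calE)=0$ -- i.e.\ condition \ref{angular-2}, not a norm bound from \ref{angular-1}; the remaining piece $f(T_{B_N}(\varphi))-T_{B_N}(f\circ\varphi)$ is handled by the uniform $\bS_1$ semicommutator bound for trigonometric polynomials plus approximation (Lemmas \ref{lem:approx-product} and \ref{lem:approx-STZ}), which is closer to your sketch but still has to be carried out. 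Finally, \ref{angular-3}$\Rightarrow$\ref{angular-1} is not a routine ``adaptation of the STZ counterexample'': the paper computes $\lim_N\Tr\SBr{T_{B_N}(\beta_N)T_{B_N}(\varphi)}=\int_{\T\setminus\calE}\varphi\dm\zeta+\int_\T\varphi\, h\dm\zeta$ with $h(\eta)=\int_\calE|\wh{k}_\zeta^B(\eta)|^2\dm\zeta$, so validity for all $\varphi\in C(\T)$ forces $h=\chi_\calE$ a.e., and then rules this out when $m(\calE)>0$ by a two-case analysis ($h$ bounded below when $0<m(\calE)<1$; $h$ bounded away from $1$ on a set of positive measure when $m(\calE)=1$). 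The case $m(\calE)=1$ in particular is not captured by the heuristic that a ``surviving atomic contribution'' keeps the trace away from $\int f(\varphi)\dm\zeta$; there one must show the limit falls strictly below the target, which requires the quantitative Clark-measure estimates of the paper.
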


\begin{remark}
\begin{enumerate}[leftmargin=*]
\item
Theorem \ref{thm:angular} extends the result in 
\cite[Theorem 7.1]{STZ1} by showing that the conclusion holds for all symbols
$\varphi\in C(\T)$. 
\item
It's clear that if $\Lambda$ is a Blaschke sequence, then a necessary requirement
for the conditions in Theorem \ref{thm:angular} to hold is that $\Lambda$
accumulates at all points of $\T$. However, this is far from sufficient. For
example, a theorem of Frostman asserts that if
$$
\sum_{j=1}^\infty \sqrt{1-|\lambda_j|} < \infty,
$$
then $|B'|<\infty$ almost everywhere on $\T$ (see, e.g., \cite[Theorem 7.17]{Mashregi1}).
\end{enumerate}
\end{remark}


\section{Preliminaries}

This section is devoted to summarising some necessary background material. For a
more comprehensive exposition, good sources are \cite[\S 5]{GMR1} for the material on
model spaces, and \cite[\S 11]{GMR1} or \cite{Matheson-Stessin1, Saksman1} for the material
on Clark unitary operators and Clark measures.

\subsection{Model spaces}

Our setting is the usual Hardy space $H^2=H^2(\D)$ of analytic functions on $\D$ with
square summable Taylor coefficients, considered as a subspace of $L^2(\T)$ by means of
boundary values. This is a reproducing kernel Hilbert space and the reproducing
kernel at $\lambda\in\D$ is $k_\lambda(w) = (1-\wb\lambda w)^{-1}$. We let $\wh{k}_\lambda
=\sqrt{1-|\lambda|^2}\,k_\lambda$ be the $L^2$-normalised kernel.

Let $B$ be an inner function. The corresponding \emph{model space} is $K_B :=
H^2\ominus B H^2$. It is standard that $K_B$ is finite dimensional if and only if $B$ is
a finite Blashke product; in this case the dimension of $K_B$ is the degree of $B$.
We say that $B$ has finite angular derivative at $\zeta\in\T$ if $B$ and $B'$ have
non-tangential limits at $\zeta$ and $|B(\zeta)|=1$. If $B$ is a Blaschke product with
zeros $\{\lambda_j\}_{j\geq0}$, this is equivalent to the condition
$$
|B'(\zeta)|
=
\sum_{j\geq0} |\wh{k}_{\lambda_j}(\zeta)|^2
<
\infty.
$$
Then if either $\lambda\in\D$ or $\lambda\in\T$ and $B$ has finite angular derivative
at $\lambda$, point evaluation at $\lambda$ is bounded on $K_B$ and the corresponding
reproducing kernel is
$$
k_\lambda^B(w) = \frac{1-\wb{B(\lambda)}B(w)}{1-\wb\lambda w}.
$$
As before, we write $\wh{k}_\lambda^B$ for the $L^2$-normalized kernel. Note that if
$\lambda\in\T$, then $\|k_\lambda\|_2 = \sqrt{|B'(\lambda)|}$. Moreover, if
$B$ is a finite Blaschke product, then for each $\alpha\in\T$, the set
$\{\wh{k}_\zeta^B : B(\zeta)=\alpha\}$ is an orthonormal basis for $K_B$.

Another natural choice of orthonormal basis for $K_B$ when $B$ is a Blaschke
product is the so-called Takenaka–Malmquist–Walsh basis. This is given by
$\{B_j \wh{k}_{\lambda_j}\}_{j\geq0}$, where $\{\lambda_j\}_{j\geq0}$ are the
zeros of $B$, $B_0=1$, and for $j\geq1$, $B_j$ is the Blaschke product with
zeros $\lambda_0, \dots, \lambda_{j-1}$. Using this, we see that if $\varphi
\in L^\infty(\T)$ is such that $T_B(\varphi)\in\bS_1$, then
\[
\Tr(T_B(\varphi))
= \sum_{j\geq0} \Br{\varphi, |\wh{k}_{\lambda_j}|^2}
= \int_\T \varphi(\zeta) |B'(\zeta)|\dm \zeta.
\label{eq:trace-formula}
\]

\subsection{Clark unitary operators and Clark measures}\label{subsec:Clark}
 
Let $B$ be an inner function with $B(0)=0$. Recall that $T_B(\varphi)$ is the truncated
Toeplitz operator on $K_B$ with symbol $\varphi$. The operator $T_B(z)$ plays a special
role and is called the \emph{compressed shift}. For each $\alpha\in\T$, the rank one
perturbation of $T_B(z)$ given by
$$
U^B_\alpha = T_B(z) + \1\otimes \wb{z}B
$$
is unitary and cyclic. The collection $\{U^B_\alpha: \alpha\in\T\}$ are known as the
\emph{Clark unitary operators} associated with $B$.

By the Spectral Theorem, each $U^B_\alpha$ is unitarily equivalent to multiplication
by $z$ on $L^2(\mu^B_\alpha)$ for some positive measure $\mu^B_\alpha$ on $\T$.
The spectral measures  $\{\mu^B_\alpha: \alpha\in\T\}$ are called the \emph{Clark measures}
associated with $B$. It is known that the set $B^{-1}(\alpha)=\left\{\zeta\in\T:
\lim_{r\to1^-}B(r\zeta)=\alpha\right\}$ is a carrier for $\mu^B_\alpha$, in the sense
that $\mu^B_\alpha(E)=\mu^B_\alpha(E\cap B^{-1}(\alpha))$ for every Borel set
$E\subseteq\T$. If $\zeta\in B^{-1}(\alpha)$, then $\mu^B_\alpha$ has have an
atom at $\zeta$ precisely when $|B'(\zeta)|<\infty$. In this case,
$$
\mu^B_\alpha(\{\zeta\})=\frac{1}{|B'(\zeta)|},
$$
and $k^B_\zeta$ is an eigenvector of $U^B_\alpha$ with eigenvalue $\zeta$.
Consequently, if $B$ is a finite Blaschke product and $\varphi$ is a bounded Borel
function on $\T$, then
\[
\varphi(U_\alpha^B)
=
\sum_{\zeta\in B^{-1}(\alpha)} \frac{\varphi(\zeta)}{|B'(\zeta)|}
k_\zeta^B\otimes k_\zeta^B
=
\int_\T \varphi(\zeta) k_\zeta^B\otimes k_\zeta^B \dm\mu_\alpha^B (\zeta),
\label{eq:Clark-rep}
\]
and hence for each $p\geq0$,
\[
\Norm{\varphi(U_\alpha^B)}_{\bS_p}^p
=
\int_\T |\varphi(\zeta)|^p |B'(\zeta)| \dm\mu_\alpha^B.
\label{eq:Clark-Sp}
\]

A deep property of Clark measures is the Aleksandrov Disintegration Theorem which states that if $f\in L^1(\T)$, then $f\in L^1(\mu^B_\alpha)$
for almost every $\alpha\in\T$ and
$$
\int_\T \int_\T f(\zeta) \dm\mu^B_\alpha(\zeta) \dm\alpha
=
\int_\T f(\zeta) \dm\zeta.
$$
One application of this is the following operator disintegration formula for $T_B(\varphi)$
\cite[Theorem 13.19]{GMR1}:
\[
T_B(\varphi) = \int_\T \varphi(U^B_\alpha) \dm\alpha.
\label{eq:op-disintegration}
\]


\section{Proofs}

\subsection{Fej\'er-type operators}
The proofs of Theorems \ref{thm:SLT} and \ref{thm:angular} rely on properties
of a sequence of averaging operators associated to $\Lambda$. In the case that
$\lambda_j=0$ for all $j\geq0$, these are precisely the Fej\'er operators (i.e.
convolution with a Fej\'er kernel). We begin by introducing these.

For each $N\in\N$ and $f\in L^2(\T)$, we define the function $E_Nf:\T\to\C$ by
$$
E_Nf(\zeta) = \int_\T f(\eta) |\wh{k}^{B_N}_\zeta(\eta)|^2\dm\eta.
$$
The next lemma summarises the properties of $E_N$ that we need.

\begin{lemma}\label{lem:fejer}
The following hold:
\begin{enumerate}[label=(\alph*), itemsep=5pt]
\item\label{lem:fejer-1}
$E_N$ is a contraction from $L^2(\nu_N)$ into $L^2(\nu_N)$.
\item\label{lem:fejer-3}
If $\zeta\in\T\setminus\calE$ and $f$ is continuous on a neighbourhood of
$\zeta$, then $E_Nf(\zeta)\to f(\zeta)$ as $N\to\infty$.
\item\label{lem:fejer-4}
If $f\in\calX_\Lambda$, then $\Norm{E_N f-f}_{L^2(\nu_N)}\to0$ as $N\to\infty$.
\end{enumerate}
\end{lemma}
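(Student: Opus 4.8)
The plan is to establish the three parts of Lemma~\ref{lem:fejer} in order, since later parts build on earlier ones.

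\textbf{Part \ref{lem:fejer-1}.} First I would observe that $E_N f(\zeta)$ is an average of $f$ against the probability measure with density $|\wh{k}^{B_N}_\zeta|^2$ with respect to $m$; note that $\int_\T |\wh{k}^{B_N}_\zeta(\eta)|^2\dm\eta = \|\wh{k}^{B_N}_\zeta\|_2^2 = 1$. Hence by Jensen (or Cauchy--Schwarz), $|E_N f(\zeta)|^2 \le \int_\T |f(\eta)|^2 |\wh{k}^{B_N}_\zeta(\eta)|^2\dm\eta$. Integrating this against $\dm\nu_N(\zeta)$ and using Fubini, I would need
$$
\int_\T\!\!\int_\T |f(\eta)|^2 |\wh{k}^{B_N}_\zeta(\eta)|^2\dm\eta\,\dm\nu_N(\zeta)
= \int_\T |f(\eta)|^2 \left(\int_\T |\wh{k}^{B_N}_\zeta(\eta)|^2\dm\nu_N(\zeta)\right)\dm\eta.
$$
The key computation is the symmetry $\int_\T |\wh{k}^{B_N}_\zeta(\eta)|^2 |B_N'(\zeta)|\dm\zeta = |B_N'(\eta)|$, equivalently $\int_\T |\wh{k}^{B_N}_\zeta(\eta)|^2\dm\nu_N(\zeta) = \frac{1}{N}|B_N'(\eta)| = $ density of $\nu_N$ at $\eta$. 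This should follow from the identity $|\wh{k}^{B_N}_\zeta(\eta)|^2 |B_N'(\zeta)| = (1-|\lambda|^2)$-type reproducing-kernel symmetry; concretely, $k^{B_N}_\zeta(\eta) = \overline{k^{B_N}_\eta(\zeta)}$ and $\|k^{B_N}_\zeta\|_2^2 = |B_N'(\zeta)|$, so $|\wh{k}^{B_N}_\zeta(\eta)|^2 = |k^{B_N}_\eta(\zeta)|^2/|B_N'(\zeta)|$, and then one integrates $|k^{B_N}_\eta(\zeta)|^2$ against $m$ using that $\wh k^{B_N}_\eta$ has norm one --- wait, more carefully one uses the Clark/Aleksandrov disintegration or the explicit harmonic-measure structure. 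In any case this yields $\|E_N f\|_{L^2(\nu_N)} \le \|f\|_{L^2(\nu_N)}$.

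\textbf{Part \ref{lem:fejer-3}.} For $\zeta\in\T\setminus\calE$ we have $|B_N'(\zeta)|\to\infty$. I would split the integral defining $E_N f(\zeta)$ over a small arc $I$ around $\zeta$ (where $f$ is continuous and close to $f(\zeta)$) and its complement. On $I$, $f(\eta)\approx f(\zeta)$ and the mass $\int_I |\wh k^{B_N}_\zeta|^2\dm\eta$ is at most $1$; on $\T\setminus I$ I would show $\int_{\T\setminus I}|\wh k^{B_N}_\zeta(\eta)|^2\dm\eta\to0$. This tail bound is the crux: using $|k^{B_N}_\zeta(\eta)|^2 \le 4/|\zeta-\eta|^2$ (from $|1-\overline{B_N(\zeta)}B_N(\eta)|\le 2$ and $|\zeta-\eta|$ fixed away from $0$ on $\T\setminus I$), we get $\int_{\T\setminus I}|\wh k^{B_N}_\zeta(\eta)|^2\dm\eta = \frac{1}{|B_N'(\zeta)|}\int_{\T\setminus I}|k^{B_N}_\zeta(\eta)|^2\dm\eta \le \frac{C_I}{|B_N'(\zeta)|}\to0$. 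Hence the mass concentrates on $I$, its total mass tends to $1$, and $E_N f(\zeta)\to f(\zeta)$. (The case $\zeta\in\calE$ for $f$ bounded is not claimed, so I need not treat it.)

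\textbf{Part \ref{lem:fejer-4}.} This is the main obstacle and I expect it to require the most care. The strategy is a standard three-epsilon / density argument: $C(\T)$ is dense in $\calX_\Lambda$ by definition of the norm $\|\cdot\|_\Lambda = \sup_N \|\cdot\|_{L^2(\nu_N)}$, and by Part \ref{lem:fejer-1} the operators $E_N - \mathrm{Id}$ are uniformly bounded (by $2$) from $L^2(\nu_N)$ to $L^2(\nu_N)$, with operator norms controlled by $\|\cdot\|_\Lambda$. So it suffices to prove $\|E_N g - g\|_{L^2(\nu_N)}\to0$ for $g\in C(\T)$. For continuous $g$, uniform continuity gives a modulus of continuity $\omega$, and I would estimate
$$
|E_N g(\zeta) - g(\zeta)| \le \int_\T |g(\eta)-g(\zeta)||\wh k^{B_N}_\zeta(\eta)|^2\dm\eta
\le \omega(\delta) + 2\|g\|_\infty \int_{|\eta-\zeta|\ge\delta} |\wh k^{B_N}_\zeta(\eta)|^2\dm\eta.
$$
Writing $\rho_N(\zeta) := \int_{|\eta-\zeta|\ge\delta}|\wh k^{B_N}_\zeta(\eta)|^2\dm\eta = \frac{1}{|B_N'(\zeta)|}\int_{|\eta-\zeta|\ge\delta}|k^{B_N}_\zeta(\eta)|^2\dm\eta \le \frac{C\delta^{-2}}{|B_N'(\zeta)|}$, the remaining task is to show $\int_\T \rho_N(\zeta)^2\,\dm\nu_N(\zeta)\to 0$, or more simply $\int_\T \rho_N\,\dm\nu_N\to 0$ after using $\rho_N\le 1$. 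Now $\int_\T \rho_N(\zeta)|B_N'(\zeta)|\dm\zeta = \int\!\!\int_{|\eta-\zeta|\ge\delta}|k^{B_N}_\zeta(\eta)|^2\dm\eta\,\dm\zeta$, and by the kernel symmetry used in Part \ref{lem:fejer-1} this equals $\int\!\!\int_{|\eta-\zeta|\ge\delta}|\wh k^{B_N}_\eta(\zeta)|^2|B_N'(\eta)|\dm\zeta\,\dm\eta = \int_\T \rho_N(\eta)|B_N'(\eta)|\dm\eta$ again, so symmetry alone does not immediately close it --- instead I would bound $\rho_N(\zeta)\le 1$ pointwise and dominate $\int_\T \rho_N\,\dm\nu_N$ by splitting $\T$ into $\{|B_N'|\le M\}$ and its complement. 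On $\{|B_N'|>M\}$, $\rho_N\le C\delta^{-2}/M$; on $\{|B_N'|\le M\}$, $\rho_N\le1$ and I need $\nu_N(\{|B_N'|\le M\})\to 0$ uniformly in... --- this last point is the genuinely delicate step and likely where the hypothesis $f\in\calX_\Lambda$ (rather than a fixed continuous $g$) and a more clever Fubini argument using the joint measure $|\wh k^{B_N}_\zeta(\eta)|^2\dm\eta\,\dm\nu_N(\zeta) = |\wh k^{B_N}_\eta(\zeta)|^2\dm\zeta\,\dm\nu_N(\eta)$ must be exploited, reducing everything to the $L^2(\nu_N)$-convergence of a single "diagonal" quantity. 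I would organize the final estimate so that $\|E_N g - g\|_{L^2(\nu_N)}^2 \le \omega(\delta)^2 + C\|g\|_\infty^2 \iint_{|\eta-\zeta|\ge\delta}|\wh k^{B_N}_\zeta(\eta)|^2\dm\eta\,\dm\nu_N(\zeta)$ and show the double integral tends to $0$ as $N\to\infty$ for each fixed $\delta$ --- which follows because, by the symmetry of the kernel and dominated convergence, the mass of $|\wh k^{B_N}_\zeta(\eta)|^2\dm\eta$ escapes to the diagonal $\nu_N$-almost everywhere (by Part \ref{lem:fejer-3}, valid $\nu_N$-a.e.\ since $\calE$ has $\nu_N$-measure... ) --- then let $\delta\to0$.
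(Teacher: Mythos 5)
Parts \ref{lem:fejer-1} and \ref{lem:fejer-3} of your proposal are correct and essentially the paper's arguments. In \ref{lem:fejer-1} your hesitation is unnecessary: by Hermitian symmetry of the reproducing kernel, $\int_\T|\wh{k}^{B_N}_\zeta(\eta)|^2\,|B_N'(\zeta)|\dm\zeta=\int_\T|k^{B_N}_\eta(\zeta)|^2\dm\zeta=\|k^{B_N}_\eta\|_2^2=|B_N'(\eta)|$, so no Clark disintegration is needed. In \ref{lem:fejer-3} you should say explicitly that you use boundedness (or integrability) of $f$ to control the far region $\T\setminus I$, exactly as the paper does via $\|f\|_\infty$.

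In part \ref{lem:fejer-4} there is a genuine gap at the crux, although it is easily repaired. The step you leave open --- showing $\int_\T\rho_N\dm\nu_N\to0$ for fixed $\delta$ --- follows in one line from your own pointwise bound: since $\rho_N(\zeta)\le 4\delta^{-2}/|B_N'(\zeta)|$ and $\dm\nu_N=N^{-1}|B_N'|\dm\zeta$, the factor $|B_N'|$ cancels and $\int_\T\rho_N\dm\nu_N\le 4\delta^{-2}/N$. (Equivalently, your $M$-splitting closes trivially because $\nu_N(\{|B_N'|\le M\})\le M/N$.) No use of the hypothesis $f\in\calX_\Lambda$ is needed at this stage; it enters only in the final density/three-epsilon step via part \ref{lem:fejer-1}. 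By contrast, the fallback you sketch --- invoking part \ref{lem:fejer-3} ``$\nu_N$-a.e.'' together with dominated convergence --- does not work: the measures $\nu_N$ change with $N$, so there is no fixed measure against which to apply dominated convergence, and $\calE$ can have full Lebesgue (hence full $\nu_N$-) measure, e.g.\ under the Frostman condition mentioned in the paper, in which case \ref{lem:fejer-3} gives nothing almost everywhere; yet \ref{lem:fejer-4} still holds, precisely because of the $1/N$ normalization rather than any pointwise convergence.

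For comparison, the paper takes a slightly different route to \ref{lem:fejer-4}: it first treats Lipschitz $f$, using $|k^{B_N}_\zeta(\eta)|=|B_N(\zeta)-B_N(\eta)|/|\zeta-\eta|$ on $\T$ to get
\[
\Norm{E_Nf-f}_{L^2(\nu_N)}^2
\le
\frac1N\int_\T\int_\T\frac{|f(\eta)-f(\zeta)|^2}{|\eta-\zeta|^2}\,|B_N(\eta)-B_N(\zeta)|^2\dm\eta\dm\zeta
\le \frac{C}{N},
\]
and then passes to general $f\in\calX_\Lambda$ by density of Lipschitz functions and part \ref{lem:fejer-1}. Your decomposition (continuous $g$, modulus of continuity, tail mass $\rho_N$) is a legitimate alternative and, once the cancellation above is observed, rests on the same $O(1/N)$ mechanism; as written, however, the decisive estimate is missing and the sketched substitute is flawed.
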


\begin{proof}
\ref{lem:fejer-1}
For $f\in L^2(\nu_N)$ we have
\begin{multline}
\Norm{E_Nf}_{L^2(\nu_N)}^2
=
\int_\T\Abs{\int_\T f(\eta)
|\wh{k}^{B_N}_\zeta(\eta)|^2\dm\eta}^2\dm\nu_N(\zeta) \\
\leq
\int_\T \int_\T |f(\eta)|^2
|\wh{k}^{B_N}_\zeta(\eta)|^2\frac{|B_N'(\zeta)|}{N}\dm\eta\dm\zeta.
\label{eq:fejer-1}
\end{multline}
Note that
$$
|\wh{k}^{B_N}_\zeta(\eta)|^2
=
\frac{|B_N'(\eta)|}{|B_N'(\zeta)|} |\wh{k}^{B_N}_\eta(\zeta)|^2,
$$
and so the right hand side of \eqref{eq:fejer-1} is equal to
$$
\int_\T \int_\T |f(\eta)|^2
|\wh{k}^{B_N}_\eta(\zeta)|^2\frac{|B_N'(\zeta)|}{N}\dm\eta\dm\zeta
=
\int_\T |f(\eta)|^2 \int_\T
|\wh{k}^{B_N}_\eta(\zeta)|^2\dm\zeta\dm\nu_N(\eta)
=
\norm{f}_{L^2(\nu_N)}^2.
$$

\ref{lem:fejer-3}
Take $\zeta\in\T\setminus\calE$ and $f\in L^2(\T)$ such that $f$ is continuous
on a neighbourhood of $\zeta$. Fix $\eps>0$ and Choose $\delta>0$ such that
$|f(\zeta)-f(\eta)|<\eps$ whenever $\eta\in I_\delta(\zeta)$, where
$$
I_\delta(\zeta)=\{\eta\in\T: |\arg\,\wb{\eta}\zeta|<\delta\}
$$
is the arc of $\T$ of width $2\delta$ centred at $\zeta$. Then
$$
f(\zeta) - E_N f(\zeta)
=
\int_\T (f(\zeta)-f(\eta))|\wh{k}_\zeta^{B_N} (\eta)|^2 \dm\eta.
$$
Observe that if $\eta\in\T\setminus I_\delta(\zeta)$, then
$$
|\wh{k}_\zeta^{B_N} (\eta)|^2
=
\frac{1}{|B_N'(\zeta)|}\Abs{\frac{B_N(\zeta)-B_N(\eta)}{\zeta-\eta}}^2
\leq
\frac{2}{\delta^2 |B_N'(\zeta)|}.
$$
Then we see that
$$
\Abs{\int_{I_\delta(\zeta)} (f(\zeta)-f(\eta))|\wh{k}_\zeta^{B_N} (\eta)|^2 \dm\eta}
\leq
\eps\int_{I_\delta(\zeta)} |\wh{k}_\zeta^{B_N} (\eta)|^2 \dm\eta
\leq \eps
$$
and
$$
\Abs{\int_{\T\setminus I_\delta(\zeta)} (f(\zeta)-f(\eta))
|\wh{k}_\zeta^{B_N} (\eta)|^2 \dm\eta}
\leq
\frac{4\norm{f}_{\infty}}{\delta^2|B_N'(\zeta)|}.
$$
Letting $N\to\infty$ we conclude that $|f(\zeta) - E_N f(\zeta)|<\eps$.

\ref{lem:fejer-4}
Let $f$ be a Lipschitz continuous function. Then
\begin{align*}
\Norm{E_Nf-f}_{L^2(\nu_N)}^2
&=
\int_\T\Abs{\int_\T \Br{f(\eta)-f(\zeta)}
|\wh{k}^{B_N}_\zeta(\eta)|^2\dm\eta}^2\dm\nu_N(\zeta) \\
&\leq
\frac{1}{N}\int_\T \int_\T |f(\eta)-f(\zeta)|^2
|\wh{k}^{B_N}_\zeta(\eta)|^2|B_N'(\zeta)|\dm\eta\dm\zeta \\
&\leq
\frac{1}{N}\int_\T \int_\T
\frac{|f(\eta)-f(\zeta)|^2}{|\eta - \zeta|^2}
|B_N(\eta)-B_N(\zeta)|^2\dm\eta\dm\zeta \\
&\leq C\frac{1}{N},
\end{align*}
which verifies the claim when $f$ is Lipschitz. For a general
$f\in\calX_\Lambda$ and $\eps>0$, choose a Lipschitz function $g$ such that
$\norm{f-g}_\Lambda <\eps$. Then using part \ref{lem:fejer-1}, we see that
for all sufficiently large $N$,
$$
\Norm{E_Nf-f}_{L^2(\nu_N)}
=
\Norm{E_N(f-g)}_{L^2(\nu_N)}
+
\Norm{E_Ng-g}_{L^2(\nu_N)}
+
\Norm{g-f}_{L^2(\nu_N)}
<
3\eps.
$$
\end{proof}

\subsection{Proof of Theorem \ref{thm:SLT}}

The proof of Theorem \ref{thm:SLT} is based on the well-established approach
of ``approximating by circulants'' (see e.g. \cite[\S 5.6.4]{Nikolski1} or
\cite[\S 4]{Gray1}). As in \cite{STZ1}, the role of circulants will be played by
functions of Clark unitary operators. Our approximation result is the content
of the following lemma.

\begin{lemma}\label{lem:approx}
For each $\varphi\in L^\infty(\T)\cap\calX_\Lambda$,
$$
\lim_{N\to\infty}\frac{1}{N}\int_\T 
\Norm{T_{B_N}(\varphi) - \varphi\Br{U_\alpha^{B_N}}}_{\bS_2}^2 \dm\alpha
=0.
$$
\end{lemma}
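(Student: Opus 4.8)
The plan is to compute the Hilbert--Schmidt norm explicitly using the operator disintegration formula \eqref{eq:op-disintegration} and the Clark representation \eqref{eq:Clark-rep}, and then recognise the resulting expression as something controlled by the Fej\'er-type operators $E_N$ from Lemma \ref{lem:fejer}. First I would expand the square:
$$
\frac{1}{N}\int_\T \Norm{T_{B_N}(\varphi) - \varphi(U_\alpha^{B_N})}_{\bS_2}^2\dm\alpha
=
\frac{1}{N}\int_\T\Br{\Norm{T_{B_N}(\varphi)}_{\bS_2}^2
- 2\Re\jap{T_{B_N}(\varphi),\varphi(U_\alpha^{B_N})}_{\bS_2}
+ \Norm{\varphi(U_\alpha^{B_N})}_{\bS_2}^2}\dm\alpha.
$$
The last term is handled by \eqref{eq:Clark-Sp} with $p=2$ together with the Aleksandrov disintegration theorem: averaging over $\alpha$ gives $\int_\T |\varphi(\zeta)|^2|B_N'(\zeta)|\dm\zeta = N\norm{\varphi}_{L^2(\nu_N)}^2$, so the normalized term is $\norm{\varphi}_{L^2(\nu_N)}^2$. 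For the cross term, I would use \eqref{eq:op-disintegration} to write $T_{B_N}(\varphi)=\int_\T\varphi(U_\beta^{B_N})\dm\beta$, so that $\int_\T\jap{T_{B_N}(\varphi),\varphi(U_\alpha^{B_N})}_{\bS_2}\dm\alpha = \int_\T\jap{T_{B_N}(\varphi),\varphi(U_\alpha^{B_N})}_{\bS_2}\dm\alpha$ and in fact the disintegration identifies this with $\Norm{T_{B_N}(\varphi)}_{\bS_2}^2$ after a second application of \eqref{eq:op-disintegration}; alternatively compute $\int_\T\jap{\varphi(U_\beta^{B_N}),\varphi(U_\alpha^{B_N})}_{\bS_2}\dm\alpha\dm\beta$ directly from \eqref{eq:Clark-rep}. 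The upshot should be that the whole expression collapses to
$$
\norm{\varphi}_{L^2(\nu_N)}^2 - \frac{1}{N}\Norm{T_{B_N}(\varphi)}_{\bS_2}^2.
$$

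The main computational step is then to express $\tfrac1N\Norm{T_{B_N}(\varphi)}_{\bS_2}^2$ in terms of $E_N$. Using the Takenaka--Malmquist--Walsh basis and the trace formula \eqref{eq:trace-formula} applied to $T_{B_N}(\wb\varphi)T_{B_N}(\varphi)$ — or more directly, noting that $\Norm{T_{B_N}(\varphi)}_{\bS_2}^2 = \sum_j \Norm{T_{B_N}(\varphi)\wh{k}^{B_N}_{\lambda_j}\!\cdot(\dots)}$ with the appropriate basis vectors — I expect to obtain $\tfrac1N\Norm{T_{B_N}(\varphi)}_{\bS_2}^2 = \jap{E_N\varphi,\varphi}_{L^2(\nu_N)}$ or a closely related pairing, perhaps $\Re\jap{E_N\varphi,\varphi}_{L^2(\nu_N)}$. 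Granting this, the quantity in the lemma equals $\norm{\varphi}_{L^2(\nu_N)}^2 - \jap{E_N\varphi,\varphi}_{L^2(\nu_N)} = \jap{\varphi - E_N\varphi,\varphi}_{L^2(\nu_N)}$, whose absolute value is at most $\Norm{\varphi - E_N\varphi}_{L^2(\nu_N)}\norm{\varphi}_{L^2(\nu_N)} \leq \Norm{\varphi - E_N\varphi}_{L^2(\nu_N)}\norm{\varphi}_\Lambda$.

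The conclusion is then immediate from Lemma \ref{lem:fejer}\ref{lem:fejer-4}: since $\varphi\in\calX_\Lambda$, $\Norm{E_N\varphi - \varphi}_{L^2(\nu_N)}\to 0$, and $\norm{\varphi}_\Lambda$ is a fixed finite constant (here I may need $\varphi\in L^\infty(\T)$ only to ensure the operators are well-defined and $\bS_2$; the norm bound uses $\calX_\Lambda$). I expect the main obstacle to be the bookkeeping in the identity $\tfrac1N\Norm{T_{B_N}(\varphi)}_{\bS_2}^2 = \jap{E_N\varphi,\varphi}_{L^2(\nu_N)}$: one must carefully relate the Hilbert--Schmidt inner product computed in the Takenaka--Malmquist--Walsh basis to the integral kernel $|\wh{k}^{B_N}_\zeta(\eta)|^2$ defining $E_N$, using the reproducing-kernel identity $|\wh{k}^{B_N}_\zeta(\eta)|^2 |B_N'(\zeta)| = |\wh{k}^{B_N}_\eta(\zeta)|^2|B_N'(\eta)|$ already exploited in the proof of Lemma \ref{lem:fejer}. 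A possible subtlety is whether one gets $E_N\varphi$ paired with $\varphi$ or with $\wb\varphi$, i.e. whether a real part appears; in either case the Cauchy--Schwarz bound above goes through since $\Re\jap{\varphi - E_N\varphi,\varphi} \leq |\jap{\varphi-E_N\varphi,\varphi}|$.
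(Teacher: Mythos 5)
Your proposal is correct and follows essentially the paper's own argument: expand the Hilbert--Schmidt norm, use the Clark representation and Aleksandrov disintegration to collapse the averaged quantity to $\jap{\varphi-E_N\varphi,\varphi}_{L^2(\nu_N)}$, and finish with Cauchy--Schwarz and Lemma \ref{lem:fejer}\ref{lem:fejer-4}. The identity you leave as ``expected'', $\tfrac1N\Norm{T_{B_N}(\varphi)}_{\bS_2}^2=\jap{E_N\varphi,\varphi}_{L^2(\nu_N)}$, is true and is exactly what the paper establishes, but via the rank-one representation $T_{B_N}(\varphi)=\int_\T\varphi(\zeta)\,k^{B_N}_\zeta\otimes k^{B_N}_\zeta\dm\zeta$ (integrate \eqref{eq:Clark-rep} over $\alpha$) together with $\Tr\SBr{(f\otimes f)(g\otimes g)}=|(f,g)|^2$, rather than the Takenaka--Malmquist--Walsh basis; in particular \eqref{eq:trace-formula} cannot be applied directly to $T_{B_N}(\wb\varphi)T_{B_N}(\varphi)$, since that product is not itself a truncated Toeplitz operator.
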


\begin{proof}
Observe that we can express $T_{B_N}(\varphi)$ as an integral of the rank one
operators $k^{B_N}_\zeta\otimes k^{B_N}_\zeta$. Indeed, by integrating
\eqref{eq:Clark-rep} with respect to $\alpha$ we see that
$$
T_{B_N}(\varphi) = \int_\T \varphi(\zeta)\,
k^{B_N}_\zeta\otimes k^{B_N}_\zeta \dm\zeta.
$$
Then using the identity $\Tr\SBr{(f\otimes f)(g\otimes g)} = |(f,g)|^2$,
we compute that
\begin{align*}
\Norm{T_{B_N}(\varphi)}_{\bS_2}^2
&=
\Tr\Br{T_{B_N}(\varphi)^* T_{B_N}(\varphi)} \\
&=
\int_\T\int_\T \wb{\varphi(\zeta)}\varphi(\eta)
\Tr\SBr{(k^{B_N}_\zeta\otimes k^{B_N}_\zeta)(k^{B_N}_\eta\otimes k^{B_N}_\eta)}
\dm\zeta \dm\eta \\
&=
\int_\T\int_\T \wb{\varphi(\zeta)}\varphi(\eta)
\Abs{\Br{k^{B_N}_\zeta, k^{B_N}_\eta}}^2 \dm\zeta \dm\eta \\
&=
\int_\T\int_\T \wb{\varphi(\zeta)}\varphi(\eta)
\Abs{\wh{k}^{B_N}_\zeta(\eta)}^2|B_N'(\zeta)| \dm\zeta \dm\eta \\
&=
\int_\T \wb{\varphi(\zeta)}E_N\varphi(\zeta) |B_N'(\zeta)| \dm\zeta.
\end{align*}
A similar computation gives
$$
\Tr\Br{\varphi\Br{U_\alpha^{B_N}}^*T_{B_N}(\varphi)}
=
\int_\T \wb{\varphi(\zeta)}E_N\varphi(\zeta) |B_N'(\zeta)|
\dm\mu_\alpha^N(\zeta).
$$
Using these and \eqref{eq:Clark-Sp}, we see that
\begin{align*}
\Norm{T_{B_N}(\varphi) - \varphi\Br{U_\alpha^{B_N}}}_{\bS_2}^2
&=\
\Norm{T_{B_N}(\varphi)}_{\bS_2}^2 + \Norm{ \varphi\Br{U_\alpha^{B_N}}}_{\bS_2}^2
-2\Re\Tr\Br{\varphi\Br{U_\alpha^{B_N}}^*T_{B_N}(\varphi)} \\
&=
\int_\T \wb{\varphi(\zeta)}E_N\varphi(\zeta) |B_N'(\zeta)| \dm\zeta
+ \int_\T |\varphi(\zeta)|^2|B_N'(\zeta)|\dm\mu_\alpha^N(\zeta) \\
&\qquad\qquad\qquad
 - 2\Re \int_\T \wb{\varphi(\zeta)}E_N\varphi(\zeta) |B_N'(\zeta)|
\dm\mu_\alpha^N(\zeta).
\end{align*}
Integrating with respect to $\alpha$ and applying Cauchy-Schwartz, we find
\begin{align*}
\frac{1}{N}\int_\T
\Norm{T_{B_N}(\varphi) - \varphi\Br{U_\alpha^{B_N}}}_{\bS_2}^2 \dm\alpha
&=
\int_\T \wb{\varphi(\zeta)}\Br{\varphi(\zeta)- E_N\varphi(\zeta)}\dm\nu_N(\zeta) \\
&\leq
\norm{\varphi}_{\Lambda}\norm{\varphi-E_N\varphi}_{L^2(\nu_N)},
\end{align*}
which goes to zero by Lemma \ref{lem:fejer}\ref{lem:fejer-4}.
\end{proof}

\begin{proof}[Proof of Theorem \ref{thm:SLT}.]
Fix $\varphi\in L^\infty(\T)\cap\calX_\Lambda$ and a function $f$ which is analytic on a
neighbourhood of $\conv\calR(\varphi)$. Let $\Omega$ be an open neighbourhood
of $\conv\calR(\varphi)$ with smooth boundary $\partial\Omega$ and such that
$f$ is analytic on a neighbourhood of $\wb{\Omega}$. Then
\begin{multline*}
f(T_{B_N}(\varphi))-f(\varphi(U_\alpha^{B_N}))
=
\int_{\partial\Omega} f(\lambda) [(\lambda I- T_{B_N}(\varphi))^{-1}
- (\lambda I- \varphi(U_\alpha^{B_N}))^{-1}]\dm\lambda \\
=
\int_{\partial\Omega} f(\lambda) (\lambda I- T_{B_N}(\varphi))^{-1}
(T_{B_N}(\varphi)-\varphi(U_\alpha^{B_N}))
(\lambda I-\varphi(U_\alpha^{B_N}))^{-1}\dm\lambda.
\end{multline*}
It immediately follows that
$$
\Norm{f(T_{B_N}(\varphi))-f(\varphi(U_\alpha^{B_N}))}_{\bS_2}
\leq C
\Norm{T_{B_N}(\varphi)-\varphi(U_\alpha^{B_N})}_{\bS_2}.
$$
As a consequence, we see that
\begin{align*}
\frac{1}{N}\Norm{f(T_{B_N}(\varphi))-T_{B_N}(f\circ\varphi)}_{\bS_2}^2
&=
\frac{1}{N}\Norm{\int_\T
\SBr{f(T_{B_N}(\varphi))-f(\varphi(U_\alpha^{B_N}))}\dm\alpha}_{\bS_2}^2 \\
&\leq
\frac{1}{N}\int_\T \Norm{f(T_{B_N}(\varphi))-
f(\varphi(U_\alpha^{B_N}))}_{\bS_2}^2\dm\alpha \\
&\leq
C\frac{1}{N}\int_\T \Norm{T_{B_N}(\varphi) - \varphi(U_\alpha^{B_N})}_{\bS_2}^2 \dm\alpha,
\end{align*}
which goes to zero by Lemma \ref{lem:approx}.

Using the formula \eqref{eq:trace-formula} we have that
$$
\frac{1}{N}\Tr T_{B_N}(f\circ\varphi) = \int_\T f(\varphi(\zeta))\dm\nu_N(\zeta),
$$
and hence
\begin{align*}
\Abs{\frac{1}{N}\Tr f(T_{B_N}(\varphi))
- \int_\T f(\varphi(\zeta))\dm\nu_N(\zeta)}^2 
&=
\Abs{\frac{1}{N}\Tr\SBr{f(T_{B_N}(\varphi))-T_{B_N}(f\circ\varphi)}}^2 \\
&\leq
\frac{1}{N}\Norm{f(T_{B_N}(\varphi))-T_{B_N}(f\circ\varphi)}_{\bS_2}^2,
\end{align*}
which tends to zero. This completes the proof when $f$ is analytic.

When $\varphi$ is real-valued and $f$ is continuous we use a standard
approximation argument. Fix $\eps>0$ and choose a polynomial $p$ such that
$|f(t)-p(t)|<\eps$ for all $t\in\conv\calR(\varphi)$. Then
if $\lambda_1\dots\lambda_N$ are the eigenvalues of $T_{B_N}(\varphi)$,
\begin{align*}
\frac{1}{N}\Abs{\Tr f(T_{B_N}(\varphi)) - \Tr p(T_{B_N}(\varphi))}
&=
\frac{1}{N}\Abs{\sum_{j=1}^N (f(\lambda_j)-p(\lambda_j))} \\
&\leq
\frac{1}{N}\sum_{j=1}^N \Abs{f(\lambda_j)-p(\lambda_j)}  < \eps.
\end{align*}
In addition,
$$
\Abs{\int_\T f(\varphi(\zeta))\dm\nu_N(\zeta)
- \int_\T p(\varphi(\zeta))\dm\nu_N(\zeta)}
\leq
\int_\T |f(\varphi(\zeta))-p(\varphi(\zeta))|\dm\nu_N(t) <\eps.
$$
We conclude that
\begin{multline*}
\Abs{\frac{1}{N}\Tr f(T_{B_N}(\varphi))
- \int_\T f(\varphi(\zeta))\dm\nu_N(\zeta)} \\
\leq
\Abs{\frac{1}{N}\Tr p(T_{B_N}(\varphi))
- \int_\T p(\varphi(\zeta))\dm\nu_N(\zeta)} +2\eps.
\end{multline*}
The first term on the right tends to zero and so the proof is complete.
\end{proof}


\subsection{Proof of Theorem \ref{thm:angular}}

We begin with two preliminary lemmas.

\begin{lemma}\label{lem:approx-product}
Let $\varphi$ and  $\psi$ be trigonometric polynomials. Then there exists $C>0$ such
that for all inner functions $B$ we have
$$
\Norm{T_B(\varphi)T_B(\psi) - T_B(\varphi\psi)}_{\bS_1} \leq C.
$$
\end{lemma}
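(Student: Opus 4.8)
\textbf{Proof plan for Lemma \ref{lem:approx-product}.}

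The plan is to reduce the general case to monomials by linearity, and then to exploit the algebraic identity relating the product of two Toeplitz operators to the Toeplitz operator of the product, with the error term being a Hankel-type correction that has controlled rank. First I would recall that on the full Hardy space one has the classical identity $T(\varphi)T(\psi) - T(\varphi\psi) = -H(\wb\varphi)^* H(\psi)$ in terms of Hankel operators; compressing to $K_B$, the analogous statement is that $T_B(\varphi)T_B(\psi) - T_B(\varphi\psi)$ equals a sum of two pieces, one coming from the ``missing'' projection $BH^2$ and one from the compression error. The key point is that when $\varphi$ and $\psi$ are trigonometric polynomials, each of these correction terms has rank bounded by a constant depending only on the degrees of $\varphi$ and $\psi$, uniformly over all inner functions $B$.

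Concretely, I would write $\varphi = \sum_{|k|\le m} a_k z^k$ and $\psi = \sum_{|l|\le m} b_l z^l$, so by bilinearity it suffices to bound $\norm{T_B(z^k)T_B(z^l) - T_B(z^{k+l})}_{\bS_1}$ by a constant for each pair $|k|,|l|\le m$, since then the full bound follows by summing the $O(m^2)$ terms with coefficients $|a_k b_l|$. For a single pair, let $P_B$ denote the orthogonal projection onto $K_B$ and $P_+$ the Riesz projection onto $H^2$. Then $T_B(z^k)T_B(z^l)f = P_B(z^k P_B(z^l f))$ while $T_B(z^{k+l})f = P_B(z^{k+l}f)$, so the difference is $P_B(z^k (P_B - I)(z^l f)) = -P_B(z^k (I-P_B)(z^l f))$. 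Now $I - P_B = (I-P_+) + B P_+ \wb B$ (the projection onto $\wb{z}\wb{H^2}\oplus BH^2$ inside $L^2$, intersected appropriately), and one checks that $(I-P_+)(z^l f)$ is supported on at most $|l|$ negative Fourier modes for $f\in H^2$, while $P_B(z^k B P_+ \wb B z^l f)$ picks out at most $|k|$ modes near the ``top'' of $K_B$ because $z^k B H^2 \subseteq B H^2$ unless $k<0$, in which case $z^k B H^2$ overshoots $BH^2$ by at most $|k|$ dimensions. In every case the operator factors through a space of dimension at most $|k|+|l|\le 2m$, so its rank is at most $2m$; since its operator norm is at most $\norm{\varphi}_\infty\norm{\psi}_\infty + \norm{\varphi\psi}_\infty \le C(m)$, the trace norm is at most $2m\cdot C(m) =: C$, independent of $B$.

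The main obstacle is making the rank bookkeeping in the previous paragraph fully rigorous and uniform in $B$: one must carefully track how multiplication by $z^{\pm k}$ interacts with the decomposition $L^2 = \wb{z}\wb{H^2} \oplus K_B \oplus BH^2$, and verify that the ``leakage'' out of $K_B$ under multiplication by a monomial of degree $k$ and back is confined to a subspace of dimension $\le|k|$ regardless of the inner function. A clean way to organize this is to use the Takenaka–Malmquist basis (or, for $B=z^N$, the standard basis) to see that $T_B(z)$ is a weighted shift-type operator and $T_B(z^k) - T_B(z)^k$ is already finite rank with rank $\le k-1$, and similarly on the co-analytic side; combining these reduces everything to the commutator-type estimate $\norm{T_B(z)^k T_B(\wb z)^l - T_B(z^k \wb z^l)}_{\bS_1}\le C$, which unwinds to a telescoping sum of rank-one defects each of norm $\le 1$. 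Once the uniform rank bound is in hand, the trace-norm estimate is immediate from $\norm{A}_{\bS_1}\le \rank(A)\,\norm{A}$.
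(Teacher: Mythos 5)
Your proposal is correct in substance, and its core strategy — reduce to monomials by bilinearity, then show the error operator is uniformly finite rank with bounded norm and invoke $\Norm{A}_{\bS_1}\le \rank(A)\,\norm{A}$ — does prove the lemma; in fact your closing paragraph (telescoping to rank-one defects) is essentially the paper's own proof. The paper, after the same monomial reduction and the observation that same-sign monomials produce no error at all, treats $\varphi=z^j$, $\psi=\wb{z}^k$ with $0\le k\le j$ and uses multiplicativity of the compressed shift on analytic symbols to write $T_B(z^j)T_B(\wb{z}^k)-T_B(z^{j-k})=T_B(z^{j-k})\sum_{m=0}^{k-1}T_B(z^m)\bigl[T_B(z)T_B(\wb{z})-I\bigr]T_B(\wb{z}^m)$, together with $I-T_B(z)T_B(\wb{z})=\1\otimes\1$ (after normalizing $B(0)=0$), so each summand has trace norm at most a fixed constant and the total is controlled by $k$. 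Your primary route instead bounds the rank of $-P_B M_{z^k}(I-P_B)M_{z^l}\big|_{K_B}$ directly via $I-P_B=(I-P_+)+M_B P_+ M_{\wb{B}}$: for $l<0$ one has $P_+(\wb{z}^{|l|}f)=S^{*|l|}f\in K_B$ (backward-shift invariance), so the defect lives in $\Span\{\wb{z},\dots,\wb{z}^{|l|}\}$; for $l\ge0$, $k<0$ the range lies in $\Span\{P_B(B\wb{z}^m)\colon 1\le m\le |k|\}$; and for same signs the error vanishes. This is exactly the bookkeeping you flagged as the main obstacle, and it does go through uniformly in $B$, giving rank at most $\max(|k|,|l|)$ and hence the trace bound. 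What the paper's telescoping buys is that all of this bookkeeping collapses into one exact algebraic identity plus a single rank-one computation; what your rank count buys is a more geometric picture of where the leakage out of $K_B$ occurs, with no need for the $B(0)=0$ normalization. One small correction: for $k\ge0$ you have $T_B(z^k)=T_B(z)^k$ exactly (the compressed shift is multiplicative on analytic symbols because $zBH^2\subseteq BH^2$), not merely up to an operator of rank $k-1$; this exact multiplicativity is precisely what makes the telescoping identity above legitimate.
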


\begin{proof}
We can suppose without loss of generality that $B(0)=0$. We begin by writing
$$
T_B(\varphi)T_B(\psi) - T_B(\varphi\psi)
=
\sum_{j,k=-\infty}^\infty
\wh\varphi(j)\wh\psi(k) \left[T_B(z^j)T_B(z^k)-T_B(z^{j+k})\right].
$$
Note that since $\varphi$ and $\psi$ are trigonometric polynomials, the sum on
the right is finite. In addition, if $j,k\in\Z$ have the same sign, 
$T_B(z^j)T_B(z^k)-T_B(z^{j+k})=0$. Thus, by interchanging $j$ and $k$ or taking adjoints
if necessary, it suffices to prove the claim for $\varphi=z^j$ and $\psi=\wb{z}^k$ with
$0\leq k\leq j$. In this case we have
\begin{align*}
T_B(z^j)T_B(\wb{z}^k)-T_B(z^{j-k})
&=
T_B(z^{j-k})\SBr{T_B(z^k)T_B(\wb{z}^k)-I} \\
&=
T_B(z^{j-k})\sum_{m=0}^{k-1}T_B(z^m)\SBr{T_B(z)T_B(\wb{z})-I}T_B(\wb{z}^m).
\end{align*}
A simple calculation shows that $I-T_B(z)T_B(\wb{z})=\1\otimes\1$, and so
$$
\Norm{T_B(z^j)T_B(\wb{z}^k)-T_B(z^{j-k})}_{\bS_1}
\leq
\sum_{m=0}^{k-1}\Norm{T_B(z)T_B(\wb{z})-I}_{\bS_1}
\leq
C.
$$
\end{proof}

\begin{lemma}\label{lem:approx-STZ}
Assume that $\Norm{T_{B_N}(\beta_N)}\to0$ as $N\to\infty$. Then for each
$\varphi\in C(\T)$ and each function $f$ which is analytic on a neighbourhood
of $\conv\calR(\varphi)$, we have
\[
\lim_{N\to\infty}
\Norm{T_{B_N}(\beta_N)\SBr{f(T_{B_N}(\varphi)) - T_{B_N}(f\circ\varphi)}}_{\bS_1} = 0
\label{eq:approx-STZ}
\]
\end{lemma}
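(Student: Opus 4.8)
The plan is to reduce the claim \eqref{eq:approx-STZ} to an estimate for polynomial $f$, then pass to general analytic $f$ via the holomorphic functional calculus, and finally to handle $\varphi\in C(\T)$ by uniform approximation with trigonometric polynomials. For the polynomial case the key observation is that it suffices to bound $\Norm{T_{B_N}(\beta_N)\bigl[T_{B_N}(\varphi)^n - T_{B_N}(\varphi^n)\bigr]}_{\bS_1}$ for each $n$. Writing $T_{B_N}(\varphi)^n - T_{B_N}(\varphi^n)$ as a telescoping sum of terms of the form $T_{B_N}(\varphi^k)\bigl[T_{B_N}(\varphi)T_{B_N}(\varphi^{\,n-k-1}) - T_{B_N}(\varphi^{\,n-k})\bigr]T_{B_N}(\text{stuff})$, each bracketed factor is $O(1)$ in $\bS_1$ by Lemma \ref{lem:approx-product} (when $\varphi$ is a trigonometric polynomial). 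Multiplying on the left by $T_{B_N}(\beta_N)$, whose operator norm tends to $0$ by hypothesis, and using $\Norm{AB}_{\bS_1}\leq\Norm{A}\,\Norm{B}_{\bS_1}$ together with the uniform bound $\Norm{T_{B_N}(\varphi^k)}\leq\Norm{\varphi}_\infty^k$, gives the desired convergence to zero.

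Next I would upgrade from polynomials to analytic $f$. Using the Riesz–Dunford integral over a contour $\partial\Omega$ enclosing $\conv\calR(\varphi)$, and the resolvent identity exactly as in the proof of Theorem \ref{thm:SLT}, one writes
\[
f(T_{B_N}(\varphi)) - T_{B_N}(f\circ\varphi)
=
\frac{1}{2\pi i}\int_{\partial\Omega} f(\lambda)
\bigl[(\lambda I - T_{B_N}(\varphi))^{-1} - T_{B_N}((\lambda-\varphi)^{-1})\bigr]\dm\lambda
+ R_N,
\]
where the correction $R_N$ comes from the fact that $T_{B_N}$ is not multiplicative; the point is that each of these pieces, after multiplication by $T_{B_N}(\beta_N)$, is handled by the polynomial estimate applied to the trigonometric-polynomial approximants of $(\lambda-\varphi)^{-1}$, with the resolvents $(\lambda I - T_{B_N}(\varphi))^{-1}$ uniformly bounded for $\lambda\in\partial\Omega$. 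Finally, for general $\varphi\in C(\T)$ pick a trigonometric polynomial $q$ with $\Norm{\varphi - q}_\infty<\eps$; then $\Norm{T_{B_N}(\varphi)-T_{B_N}(q)}\leq\eps$, so $f(T_{B_N}(\varphi))$ and $f(T_{B_N}(q))$ differ by $O(\eps)$ in operator norm (uniformly in $N$, using analyticity of $f$ on a fixed neighbourhood), and similarly $f\circ\varphi$ and $f\circ q$ differ by $O(\eps)$ uniformly; multiplying by $T_{B_N}(\beta_N)$ and noting $\Norm{T_{B_N}(\beta_N)}_{\bS_1}\leq\Tr T_{B_N}(\beta_N)=N\cdot\frac1N\int_\T\beta_N|B_N'|\dm\zeta=1$ by \eqref{eq:trace-formula}, the error terms contribute $O(\eps)$ in $\bS_1$.

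The main obstacle is bookkeeping in the functional-calculus step: Lemma \ref{lem:approx-product} is stated for trigonometric polynomials with a constant depending on the polynomials, so one must be careful that the trigonometric-polynomial approximations of the resolvent symbols $(\lambda-\varphi)^{-1}$ can be chosen with constants controlled uniformly in $\lambda\in\partial\Omega$ (and that the degrees don't blow up), so that the sum over the telescoping decomposition stays $O(1)$ in $\bS_1$ before multiplication by the vanishing factor $T_{B_N}(\beta_N)$. Once that uniformity is secured, everything collapses to "bounded in $\bS_1$" times "vanishing in operator norm", which tends to zero.
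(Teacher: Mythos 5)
Your treatment of the core case is the paper's: for trigonometric-polynomial $\varphi$ and polynomial $f$, the telescoping/induction based on Lemma \ref{lem:approx-product} gives $\sup_N\Norm{T_{B_N}(\varphi)^k-T_{B_N}(\varphi^k)}_{\bS_1}<\infty$, and multiplying by $T_{B_N}(\beta_N)$, whose operator norm vanishes, settles \eqref{eq:approx-STZ} in that case; likewise your use of $\Norm{T_{B_N}(\beta_N)}_{\bS_1}=\Tr T_{B_N}(\beta_N)=1$ (positivity of $\beta_N$ plus \eqref{eq:trace-formula}) to absorb symbol-approximation errors is exactly the mechanism the paper uses, which obtains the same bound from \eqref{eq:Clark-Sp} and \eqref{eq:op-disintegration}. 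The genuine gap is the passage from polynomial to analytic $f$. You propose to compare $(\lambda I-T_{B_N}(\varphi))^{-1}$ with $T_{B_N}((\lambda-\varphi)^{-1})$ inside a Riesz--Dunford integral by approximating the resolvent symbols $(\lambda-\varphi)^{-1}$ by trigonometric polynomials, and you yourself flag, without resolving, the resulting uniformity problem: the constant in Lemma \ref{lem:approx-product} depends on the polynomials involved, so one must produce approximants whose Lemma-\ref{lem:approx-product} constants and approximation errors are controlled uniformly over $\lambda\in\partial\Omega$. As written this step is an unproved assertion, and since your final case of general $\varphi\in C(\T)$ with analytic $f$ reduces to it, the lemma is not established. (A smaller slip: your ``correction $R_N$'' is actually zero, since $T_{B_N}(f\circ\varphi)=\frac{1}{2\pi i}\int_{\partial\Omega}f(\lambda)\,T_{B_N}\bigl((\lambda-\varphi)^{-1}\bigr)\dm\lambda$ holds exactly; no multiplicativity correction enters at that point.)

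The paper closes this step by approximating $f$, not the resolvent symbols. Since $\conv\calR(\varphi)$ is compact and convex, its complement is connected and $f$ is approximated uniformly by polynomials $p$ on a compact convex neighbourhood; the numerical range of each $T_{B_N}(\varphi)$ lies in $\conv\calR(\varphi)$, so $\Norm{(\lambda I-T_{B_N}(\varphi))^{-1}}\leq\dist(\lambda,\conv\calR(\varphi))^{-1}$ uniformly in $N$, whence $\Norm{f(T_{B_N}(\varphi))-p(T_{B_N}(\varphi))}\leq C\sup_{\partial\Omega}|f-p|$ and $\Norm{T_{B_N}(f\circ\varphi-p\circ\varphi)}\leq\Norm{f\circ\varphi-p\circ\varphi}_\infty$ are $O(\eps)$ uniformly in $N$; multiplying these errors by $\Norm{T_{B_N}(\beta_N)}_{\bS_1}\leq1$ reduces the analytic case to the polynomial case you have already proved (this is what the paper means by ``handled similarly''). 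Your route could in principle be repaired --- use continuity of $\lambda\mapsto(\lambda-\varphi)^{-1}$ into $C(\T)$ and compactness of $\partial\Omega$ to get finitely many approximants for a given $\eps$, hence a single Lemma-\ref{lem:approx-product} constant that is then killed by $\Norm{T_{B_N}(\beta_N)}\to0$ --- but that repair needs to be written out, and the polynomial approximation of $f$ is both shorter and what the paper does.
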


\begin{proof}
Let us first show that if $\varphi$ is a trigonometric polynomial, then for
each integer $k\geq0$,
\[
\sup_{N\geq0}\Norm{T_{B_N}(\varphi)^k - T_{B_N}(\varphi^k)}_{\bS_1} < \infty.
\label{eq:approx-STZ2}
\]
We use induction on $k$. In particular, since
\begin{multline*}
\Norm{T_{B_N}(\varphi)^{k} - T_{B_N}(\varphi^{k})}_{\bS_1}
\leq
\Norm{T_{B_N}(\varphi)}\Norm{T_{B_N}(\varphi)^{k-1} - T_{B_N}(\varphi^{k-1})}_{\bS_1} \\
+
\Norm{T_{B_N}(\varphi)T_{B_N}(\varphi^{k-1}) - T_{B_N}(\varphi^{k})}_{\bS_1},
\end{multline*}
it follows from Lemma \ref{lem:approx-product} that if \eqref{eq:approx-STZ2} holds
for some $k-1$, it also holds for $k$. Since it trivially holds for $k=0$, the
claim follows. This immediately gives \eqref{eq:approx-STZ} in the case that
$\varphi$ is a trigonometric polynomial and $f$ is a polynomial.

For the general case, we use an approximation argument. For instance, for
$\varphi\in C(\T)$ and a polynomial $f$, we take $\eps>0$ and choose a 
trigonometric polynomial $\psi$ such that $\Norm{\varphi-\psi}_\infty < \eps$,
then estimate
\begin{align*}
\Norm{T_{B_N}(\beta_N)\SBr{f(T_{B_N}(\varphi)) - T_{B_N}(f\circ\varphi)}}_{\bS_1}
&\leq
\Norm{T_{B_N}(\beta_N)}_{\bS_1}\Norm{f(T_{B_N}(\varphi)) - f(T_{B_N}(\psi))} \\
& \quad +
\Norm{T_{B_N}(\beta_N)\SBr{(T_{B_N}(\psi)) - T_{B_N}(f\circ\psi)}}_{\bS_1} \\
& \quad +
\Norm{T_{B_N}(\beta_N)}_{\bS_1}\Norm{T_{B_N}(f\circ\varphi-f\circ\psi)}.
\end{align*}
We know that for sufficiently large $N$, the second term is smaller than $\eps$.
In addition, by \eqref{eq:Clark-Sp} and \eqref{eq:op-disintegration},
$$
\Norm{T_{B_N}(\beta_N)}_{\bS_1}
\leq
\int_\T \Norm{\beta_N(U^{B_N}_\alpha)} \dm\alpha
=
1.
$$
Then since
$$
\Norm{f(T_{B_N}(\varphi)) - f(T_{B_N}(\psi))}
\leq
C\Norm{T_{B_N}(\varphi) - T_{B_N}(\psi)}
\leq
C\Norm{\varphi - \psi}_\infty
<
C\eps,
$$
and
$$
\Norm{T_{B_N}(f\circ\varphi-f\circ\psi)}
\leq
\Norm{f\circ\varphi - f\circ\psi}_\infty
<
\eps,
$$
the claim follows. Finally, passing from $f$ a polynomial to analytic $f$ is
handled similarly.
\end{proof}

\begin{proof}[Proof of Theorem \ref{thm:angular}.]
If $\Lambda$ is not a Blaschke sequence, then it is known that \ref{angular-1},
\ref{angular-2} and \ref{angular-3} are all satisfied and there is nothing to
prove. So we will suppose that $\Lambda$ is a Blaschke sequence, and let $B$
be the Blaschke product with zero set $\Lambda$.
To ease notation, we will write $\mu_\alpha^N$ instead of $\mu_\alpha^{B_N}$
for the Clark measures associated to $B_N$.

\ref{angular-1}$\Longrightarrow$\ref{angular-2}. Obeserve that
\ref{angular-2} holds precisely if $\beta_N\to0$ almost everywhere on $\T$. Recall
that $\norm{\beta_N(U_\alpha^{B_N})}=\norm{\beta_N}_{L^\infty(\mu_\alpha^N)}$. Then if
\ref{angular-1} holds, we have that
\begin{multline*}
\int_\T \lim_{N\to\infty}\beta_N(\zeta) \dm \zeta
=
\lim_{N\to\infty}\int_\T \beta_N(\zeta) \dm \zeta 
=
\lim_{N\to\infty}\int_\T \int_\T \beta_N(\zeta) \dm\mu_\alpha^N (\zeta) \dm\alpha \\
\leq
\lim_{N\to\infty}\int_\T \norm{\beta_N}_{L^\infty(\mu_\alpha^N)} \dm\alpha
=
0.
\end{multline*}
Since each $\beta_N$ is positive we must have that $\beta_N\to0$ almost everywhere.

\ref{angular-2}$\Longrightarrow$\ref{angular-1}.
Take $\alpha\in\T$ and suppose that
$$
\limsup_{N\to\infty} \norm{\beta_N(U_\alpha^{B_N})}
=
\limsup_{N\to\infty} \norm{\beta_N}_{L^\infty(\mu_\alpha^N)}
\geq \eps.
$$
for some $\eps>0$. Then there is an increasing sequence of integers $N_j$ and
$\zeta_j\in\supp\mu_\alpha^{N_j}$ such that $|B_{N_j}'(\zeta_j)|\leq 1/\eps$.
By passing to a subsequence if necessary we can suppose that $\zeta_j\to\zeta$.

Recall that $I_\delta(\zeta)$ is the arc of $\T$ with centre $\zeta$ and width $2\delta$.
Take $\delta>0$ and $\chi\in C(\T)$ such that $0\leq\chi\leq1$, $\chi\equiv 1$ on
$I_\delta(\zeta)$ and $\supp\chi\subseteq I_{2\delta}(\zeta)$. Then for all sufficiently
large $j$, $\chi(\zeta_j)=1$ and so
$$
\int_\T \chi \dm \mu_\alpha^{N_j}
=\sum_{\eta\in B_{N_j}^{-1}(\alpha)} \frac{\chi(\eta)}{|B_{N_j}'(\eta)|}
\geq
\frac{\chi(\zeta_j)}{|B_{N_j}'(\zeta_j)|}
\geq\eps.
$$
Since $\mu_\alpha^N$ converges in the weak-$^*$ topology to $\mu_\alpha^B$, we conclude
that
$$
\mu_\alpha^B(I_{2\delta}(\zeta)) \geq \int_\T \chi \dm \mu_\alpha^B \geq \eps.
$$
Since this holds for all $\delta>0$, $\mu_\alpha^B$ must have an atom at $\zeta$ and
thus $|B'(\zeta)|<\infty$. In particular, this implies that
$\mu_\alpha^B(\calE)>0$. Consequently, the function
$$
\alpha\mapsto \int_{\calE}  \dm \mu_\alpha^B
$$
is strictly positive on the set
$\CBr{\alpha\in\T:\norm{\beta_N(U_\alpha^{B_N})}\nrightarrow0}$.
So if this set has positive measure,
$$
m(\calE)
=
\int_\T\int_{\calE} \dm \mu_\alpha^B \dm m >0.
$$

\ref{angular-2}$\Longrightarrow$\ref{angular-3}.
Let $\varphi$ be a continuous function. Then, as in the proof of Lemma
\ref{lem:approx}, we compute that
$$
\Tr \Br{T_{B_N}(\beta_N)T_{B_N}(\varphi)}
=
\int_\T \int_\T \beta_N(\zeta) \varphi(\eta) |\wh{k}^{B_N}_\zeta|^2
|B_N'(\zeta)| \dm\zeta\dm\eta
=
\int_\T E_N\varphi(\zeta)\dm\zeta.
$$
By Lemma \ref{lem:fejer}\ref{lem:fejer-3}, we have that 
$E_n\varphi\to\varphi$ almost everywhere on $\T\setminus\calE$, and Since
$\Norm{E_N\varphi}_\infty \leq\Norm{\varphi}_\infty$ for each $N$,
\[
\int_{\T\setminus\calE} E_N\varphi(\zeta)\dm\zeta
\to
\int_{\T\setminus\calE} \varphi(\zeta)\dm\zeta
\quad\text{as}\quad N\to\infty.
\label{eq:integral}
\]
This shows that if $m(\calE)=0$, then
\[
\lim_{N\to\infty}\Tr \Br{T_{B_N}(\beta_N)T_{B_N}(\varphi)}
=
\int_\T \varphi(\zeta)\dm \zeta.
\label{eq:STZ-reduced}
\]
We conclude by by noting that if $f$ is analytic on a neighbourhood of
$\conv\calR(\varphi)$, then
\begin{align*}
& \Abs{\Tr \SBr{T_{B_N}(\beta_N)f(T_{B_N}(\varphi))}
-
\int_\T f(\varphi(\zeta))\dm \zeta} \\
&\leq
\Abs{\Tr \SBr{T_{B_N}(\beta_N)T_{B_N}(f\circ\varphi))} 
-
\int_\T f(\varphi(\zeta))\dm \zeta} \\
& \quad +
\Norm{T_{B_N}(\beta_N)\SBr{f(T_{B_N}(\varphi)) - T_{B_N}(f\circ\varphi)}}_{\bS_1}.
\end{align*}
Using \eqref{eq:STZ-reduced} with $f\circ\varphi$ in place of $\varphi$ we see that
that first term on the right goes to zero, and Lemma \ref{lem:approx-STZ} shows the
second term goes to zero.

\ref{angular-3}$\Longrightarrow$\ref{angular-2}.
To begin with, we observe that for $\zeta\in\calE$ and $N\in\N$,
$k^{B_N}_\zeta=P_{B_N}k^{B}_\zeta$, where $P_{B_N}$ is the orthogonal
projection onto $K_{B_N}$. Indeed, for each $f\in K_B$,
$$
\Br{f,k^{B_N}_\zeta}
=
P_{B_N}f(\zeta)
=
\Br{P_{B_N}f, k^{B}_\zeta}
=
\Br{f, P_{B_N}k^{B}_\zeta}.
$$
It follows that $k^{B_N}_\zeta\to k^{B}_\zeta$ in $L^2(\T)$. Consequently,
$\wh{k}^{B_N}_\zeta\to \wh{k}^{B}_\zeta$ in $L^2(\T)$, and
then the Dominated Convergence Theorem implies that for each $\varphi\in C(\T)$,
$$
\lim_{N\to\infty}\int_{\calE} E_N\varphi(\zeta)\dm\zeta
=
\int_\calE \int_\T \varphi(\eta) |\wh{k}_\zeta^{B} (\eta)|^2 \dm\eta \dm\zeta
=
\int_\T \varphi(\eta) \int_\calE |\wh{k}_\zeta^{B} (\eta)|^2 \dm\zeta \dm\eta. 
$$
Combining this with \eqref{eq:integral} we see that \eqref{eq:STZ-reduced} holds for
all $\varphi\in C(\T)$ if and only if the function
$$
h(\eta) = \int_\calE |\wh{k}_\zeta^{B} (\eta)|^2 \dm\zeta
$$
is equal almost everywhere to the characteristic function of $\calE$. We will
show that this cannot be true if $m(\calE)>0$. We consider two cases separately.

\textbf{Case 1:} $0<m(\calE)<1$.

For $R\geq1$, set
\begin{align*}
\calE_0 &= \calE_0(R) = \CBr{\zeta\in\T :\, |B'(\zeta)|\leq R}, \\
G_0 &= G_0(R) = \CBr{\alpha\in\T :\, \mu_\alpha^B(\calE_0(R))\geq 1/R}.
\end{align*}
Observe that since the sets $G_0(R)$ are increasing with $R$, and
$$
m\Br{\bigcup_{R\geq1}^\infty G_0(R)}
=
m\Br{\CBr{\alpha\in\T :\, \mu_\alpha^B(\calE)>0}} > 0,
$$
$m(G_0(R))>0$ for all sufficiently large $R$. From now on we fix such an $R$
and set $\delta = m(G_0)/3$. Then for each $\eta\in\T$,
\begin{multline*}
h(\eta)
\geq
\int_{\calE_0} |\wh{k}_\zeta^{B} (\eta)|^2 \dm\zeta
=
\int_\T \int_{\calE_0} |\wh{k}_\zeta^{B} (\eta)|^2
\dm\mu_\alpha^B(\zeta) \dm\alpha \\
\geq
\int_{G_0\setminus I_\delta(B(\eta))} \int_{\calE_0}
|\wh{k}_\zeta^{B} (\eta)|^2 \dm\mu_\alpha^B(\zeta) \dm\alpha.
\end{multline*}
Note that $m(G_0\setminus I_\delta(B(\eta)))\geq \delta$. Moreover, if
$\alpha\in\T\setminus I_\delta(B(\eta))$ and $\zeta\in\calE_0\cap B^{-1}(\alpha)$,
then
$$
|\wh{k}_\zeta^{B} (\eta)|^2
=
\frac{1}{|B'(\zeta)|}\Abs{\frac{B(\zeta)-B(\eta)}{\zeta - \eta}}^2
\geq
\frac{\delta^2}{4|B'(\zeta)|}
\geq
 \frac{\delta^2}{4R}.
$$
Combining the previous two estimates we conclude that
$$
h(\eta)
\geq
\frac{\delta^2}{4R}
\int_{G_0\setminus I_\delta(B(\eta))} \mu_\alpha^B(\calE_0) \dm\alpha
\geq
\frac{\delta^3}{4R^2}
>0.
$$
Since $h$ is bounded away from zero, $h$ cannot be equal to the characteristic
function of $\calE$.

\textbf{Case 2:} $m(\calE)=1$.

Set $M=\ess\-\inf|B'|$. Recall that since $B(0)=0$, $M\geq1$. For $\eps>0$, set
$V=\CBr{\zeta\in\T: \, |B'(\zeta)|\leq M+\eps}$. Note that $m(V)>0$ for all
$\eps>0$. We will show that for sufficiently small $\eps$, $h$ is bounded away
from $1$ on $V$.

For each $\eta\in V$, we have that
\begin{align*}
h(\eta) 
&=
\int_\T \frac{|k_\eta^{B} (\zeta)|^2}{|B'(\zeta)|} \dm\zeta \\
&=
1 - \int_\T |k_\eta^{B} (\zeta)|^2
\Br{\frac{1}{|B'(\eta)|}-\frac{1}{|B'(\zeta)|}} \dm\zeta \\
&\leq
1 - \int_\T |k_\eta^{B} (\zeta)|^2
\Br{\frac{1}{M+\eps}-\frac{1}{|B'(\zeta)|}} \dm\zeta \\
&\leq
1 - \int_\T |k_\eta^{B} (\zeta)|^2
\Br{\frac{1}{M}-\frac{1}{|B'(\zeta)|}} \dm\zeta + \frac{\eps}{M}.
\end{align*}
Let us suppose that
\[
A
:=
\inf_{\eta\in\T} \int_\T |k_\eta^{B} (\zeta)|^2
\Br{\frac{1}{M}-\frac{1}{|B'(\zeta)|}} \dm\zeta
> 0.
\label{eq:STZ-integral}
\]
Then by taking $0<\eps\leq A/2$, we get that $h(\eta)\leq 1 -\eps$ for all
$\eta\in V$. So it only remains to prove \eqref{eq:STZ-integral}. We will use an
argument similar to that in Case 1 above.

For large $R>0$, set
\begin{align*}
\calE_1 &= \CBr{\zeta\in\T :\, M+1/R \leq |B'(\zeta)| \leq R}, \\
G_1 &= \CBr{\alpha\in\T :\, \mu_\alpha^B(\calE_1)>0}.
\end{align*}
As before, we have that $m(\calE_1)>0$ for all sufficiently large $R$ and we choose
$R$ such that this holds. Set $\delta=m(G_1)/3$ and take $\eta\in\T$. Then for $\alpha\in
G_1\setminus I_{\delta}(B(\eta))$, $\mu_\alpha^B$ must have an atom at some point
$\zeta_0\in\calE_1$. Moreover, $|k_\eta^{B} (\zeta_0)|\geq \delta/2$, and so it follows
that in this case
$$
\int_\T |k_\eta^{B} (\zeta)|^2
\Br{\frac{1}{M}-\frac{1}{|B'(\zeta)|}} \dm\mu_\alpha^B(\zeta)
\geq
\frac{|k_\eta^{B} (\zeta_0)|^2}{|B'(\zeta_0)|}\Br{\frac{1}{M}-\frac{1}{|B'(\zeta_0)|}}
\geq
\frac{\delta^2}{8M^2R^2}.
$$
Integrating with respect to $\alpha$, we conclude that
$$
\int_\T |k_\eta^{B} (\zeta)|^2
\Br{\frac{1}{M}-\frac{1}{|B'(\zeta)|}} \dm\zeta
\geq
\int_{G_1\setminus I_{\delta}(B(\eta))} 
\frac{\delta^2}{8M^2R^2} \dm\alpha
\geq
\frac{\delta^3}{8M^2R^2}.
$$
\end{proof}


\end{document}